\newcommand{\inv}{^{\raisebox{.2ex}{$\scriptscriptstyle-1$}}}   
\definecolor{brightmaroon}{rgb}{0.76, 0.13, 0.28}
\newtheorem{theorem}{Theorem}[section]
\newtheorem{proposition}[theorem]{Proposition}
\newtheorem{corollary}[theorem]{Corollary}
\newtheorem{lemma}[theorem]{Lemma}
\newtheorem{definition}[theorem]{Definition}
\newtheorem{remark}[theorem]{Remark}
\numberwithin{equation}{section}   
\newtheorem{conrem}{Concluding remarks}
\begin{document}

\title{Spectral topologies on skew braces}

\author{Themba Dube}

\address{(1) Department of Mathematical Sciences, University of South Africa,
P.O. Box 392, 0003 Pretoria, South Africa.
(2) National Institute for Theoretical and Computational Sciences (NITheCS), South Africa.}
\email{dubeta@unisa.ac.za}

\author{Amartya Goswami}
\address{(1) Department of Mathematics and Applied Mathematics, University of Johannesburg, P.O. Box 524, Auckland Park 2006, South Africa.
(2) National Institute for Theoretical and Computational Sciences (NITheCS), South Africa.}
\email{agoswami@uj.ac.za}

\subjclass{16T25; 08A99}
%Yang-Baxter equations

%None of the above, but in this section

\keywords{skew brace, prime ideal, irreducibility, spectral space}

\begin{abstract}
Using a new definition of a prime ideal of a skew brace $A,$  on set $\mathrm{Spec}\,A$ of prime ideals of $A$ we endow a spectral topology (in the sense of \textsc{Grothendieck}). We characterize irreducible closed subsets of $\mathrm{Spec}\,A$ and prove every irreducible closed subset of the space has a unique generic point. We give a sufficient condition for the space to be Noetherian. We study continuous maps between such spaces, and finally, we prove that $\mathrm{Spec}(\mathrm{Idl}\,A)$ is a spectral space, where $\mathrm{Idl}\,A$ is the set of all ideals of $A$.
\end{abstract}

\maketitle
\section{Introduction}

To study non-degenerate involutive set-theoretic solutions of the Yang--Baxter equation, \textsc{Rump} introduced a new algebraic structure in \cite{R07} called a (left) brace, which was generalized into noncommutative setting by \textsc{Guarnieri \& Vendramin} in \cite{GV17} and called a (left) skew brace. Since the introduction of skew braces, several algebraic properties of them have been studied. \textsc{Jespers, Kubat,Van Antwerpen, \& Vendramin} studied factorization of skew left braces through strong left ideals in \cite{JKAV19} and proved analogs of It\^{o}'s theorem in the context of skew left braces, whereas in \cite{JKAV21}, they obtained a Wedderburn type decomposition for Artinian skew
left braces and also proved analogues of a theorem of
Wiegold, a theorem of Schur and its converse. In \cite{CSV19}, \textsc{Ced\'{o}, Smoktunowicz \& Vendramin} studied series of left ideals of skew left braces that are analogs of upper central series of groups and using that they defined left and right nilpotent skew left braces. The concepts like ideals, series of ideals, prime, and semiprime
ideals, Baer and Wedderburn radicals and solvability for skew braces have been explored in \cite{KSV21} by \textsc{Konovalov, Smoktunowicz \& Vendramin}.

Skew braces are fairly new algebraic structures and much more study of their algebraic aspects yet to come. Compared to algebraic properties studied so far, little has been explored from the topological side. The aim of this paper is to initiate that aspect and we do so by studying some of the topological properties of the spectrum of prime ideals of a skew brace. It is worth it to mention that in \cite{KSV21}, it has been pointed out that ``in the conference `Groups, rings, and the Yang--Baxter
equation,' Spa, 2017, Louis Rowen suggested that it could be interesting to study prime ideals of skew braces.'' To meet our requirements and to make analogy with the prime spectrum of a (commutative) ring, we propose a new definition of prime ideals of a skew brace. We discuss separation axioms and irreducubility. Using a skew brace homomorphism, we obtain continuous maps between spaces of spectra and study properties of these maps. Finally, we prove a result on spectrality. We end the paper with some open questions.

\section{Preliminaries}   

A \emph{skew left brace} is a triple $\left(A, +, \circ\right)$ such that $\left(A, +\right)$ and $\left(A, \circ\right)$ are groups, and the operations satisfy the identity:  
$$
a\circ\left(b + c\right) = a\circ b - a + a\circ c,
$$
for all $a, b, c$ in $A$, where $-a$ denotes the inverse of $a$ with respect to the $+$ operation. 
By a skew brace, we will always mean a 
skew left brace. We say that $\left(A, +\right)$ is the additive  group of $A$, whereas $\left(A, \circ\right)$ is the multiplicative group of $A$. The identity of the additive group of $A$ coincides with that of the multiplicative group  of $A,$ and we denote the common identity element by $e$. The  multiplicative inverse of an element $a$ in $A$ is denoted by $a\inv$. We write $A$ as a skew brace to mean the triple $\left(A,+,\circ\right)$. 

Given a skew brace $A$, a new operation $*$ is defined as follows: 
$$
a* b = - a + a\circ b  - b,   
$$
for all $a,b$ in $A$.
In particular, $a* b = \lambda_a\left(b\right) - b$, for all $a,b$ in $A$, where
$\lambda_a$ is an automorphism of $(A,+)$ and is defined by: $$\lambda_a(b)=-a+a\circ b.$$  

An \emph{ideal} $I$ of a skew brace $A$ is a normal subgroup of both the group structures $(A, +)$ and $(A, \circ)$ such that $\lambda_a(I)\subseteq I$ for all $a\in A$. We define the \emph{trivial} or \emph{zero} ideal of $A$ as the singleton set $\{e\}$ of $A$ and denote it by $0$, whereas by a \emph{proper} ideal $I$, we mean $I$ is an ideal of $A$ and $I\neq A.$ If $S$ is a subset of $A$, then $\langle S \rangle$ denotes the ideal generated by $S$. A \emph{maximal ideal} $M$ is a proper ideal of $A$ and not properly containing in another proper ideal of $A$. 
We denote the set of all ideals of $A$ by $\mathrm{Idl}\,A$, whereas the set of all maximal of a skew brace $A$ is denoted by $\mathrm{Max}\,A$. 
Involving the $*$ operation, a characterisation (see \textsc{Ced\'{o}, Smoktunowicz, \& Vendramin} \cite[Lemma 1.8, Lemma 1.9]{CSV19}) of normal subgroups of any skew brace to be an ideal is given in the following

\begin{proposition} \label{ali} 
 Suppose $A$ is a skew brace and $I$ is a normal subgroup of $\left(A,+\right)$. Then $I$ is an ideal of $A$ if and only if $A* I\subseteq I$ and $I* A\subseteq I$.
\end{proposition}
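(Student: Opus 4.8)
The plan is to base everything on three identities that hold in any skew brace and follow at once from the definitions: $x\circ y = x + \lambda_x(y)$, $x\circ y = x + (x*y) + y$, and $\lambda_x(x\inv) = -x$ (the last by applying the first to $x\circ x\inv = e$). From these I would first derive, by distributing $a\circ(-)$ over a sum via the left brace identity $a\circ(b+c) = a\circ b - a + a\circ c$ (used twice), the conjugation formula
$$
a\circ i\circ a\inv \;=\; a + \lambda_a(i) + \lambda_a(i*a\inv) - a \qquad (a,i\in A);
$$
concretely, write $i\circ a\inv = i + (i*a\inv) + a\inv$, apply $a\circ(-)$ to the three summands, and use $\lambda_a(a\inv) = -a$ on the last one to produce the trailing $-a$. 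I expect this identity to be the only step requiring any ingenuity; the rest is bookkeeping, the sole caveat being that $(A,+)$ is not assumed abelian, so one must keep the summands and the conjugating $\pm a$ in order throughout.

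For the forward implication, assume $I$ is an ideal. Then $A*I\subseteq I$ is immediate, since $a*i = \lambda_a(i) - i$ with $\lambda_a(i)\in I$ and $I$ an additive subgroup. For $I*A\subseteq I$, fix $i\in I$, $a\in A$ and put $i' := a\inv\circ i\circ a$, which lies in $I$ by normality of $I$ in $(A,\circ)$; since then $i\circ a = a\circ i'$, one rewrites $i*a = -i + a\circ i' - a = -i + a + \lambda_a(i') - a$, and this lies in $I$ because $\lambda_a(i')\in I$ and $I$ is normal in $(A,+)$.

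For the converse, assume $I$ is normal in $(A,+)$ with $A*I\subseteq I$ and $I*A\subseteq I$; I would check the three defining conditions of an ideal in turn. (i) $\lambda_a(I)\subseteq I$, because $\lambda_a(i) = (a*i)+i$. (ii) $I$ is a subgroup of $(A,\circ)$: it contains $e$; it is $\circ$-closed since $i\circ j = i+\lambda_i(j)\in I$ using (i); and it is closed under $\circ$-inversion since $i*i\inv = \lambda_i(i\inv)-i\inv = -i-i\inv\in I$ forces $i\inv\in I$. (iii) $I$ is normal in $(A,\circ)$: by the conjugation formula together with (i) and $I*A\subseteq I$, the element $a\circ i\circ a\inv = a + \lambda_a(i) + \lambda_a(i*a\inv) - a$ lies in $I$ by additive normality. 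Hence $I$ is an ideal. The one genuine subtlety — that one might seem to need $\lambda_a\inv(I)\subseteq I$ rather than $\lambda_a(I)\subseteq I$ — is sidestepped precisely by the shape of the conjugation formula, in which only $\lambda_a$, never its inverse, is applied to elements already known to lie in $I$.
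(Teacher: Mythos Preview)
Your argument is correct. The paper, however, does not supply its own proof of this proposition: it is stated with a citation to \textsc{Ced\'{o}, Smoktunowicz, \& Vendramin} \cite[Lemma 1.8, Lemma 1.9]{CSV19} and used thereafter as a black box. So there is no in-paper proof to compare against; what you have written is a complete, self-contained derivation of the cited result.

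A brief comment on your approach for the record. The forward direction is the standard two-line check. For the converse, the only nontrivial point is $\circ$-normality of $I$, and your conjugation identity
\[
a\circ i\circ a\inv \;=\; a + \lambda_a(i) + \lambda_a(i*a\inv) - a
\]
handles it cleanly: once $A*I\subseteq I$ gives $\lambda$-invariance and $I*A\subseteq I$ gives $i*a\inv\in I$, the right-hand side is an additive conjugate of an element of $I$. This is essentially how the argument in \cite{CSV19} proceeds as well, so your route is not a genuine departure from the literature, merely a careful unpacking of it. Your observation that only $\lambda_a$ (never $\lambda_a\inv$) is applied to elements of $I$ is a nice point and explains why no bijectivity of $\lambda_a|_I$ needs to be invoked.
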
 
 
\begin{lemma}\label{inid}
 If $\{I_{\lambda}\}_{\lambda \in \Lambda}$ is a family of ideals of $A$, then $\bigcap_{\lambda \in \Lambda} I_{\lambda}$ is also an ideal of $A$. If $I$ and $J$ are ideals of $A$, so is $I+J,$ where $I+J$ is defined as the additive subgroup of $A$ generated by $\{ i+j \mid i\in I,\, j\in J\}.$
\end{lemma}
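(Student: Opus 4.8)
The plan is to handle the two claims separately; the first is routine, while the second rests on the standard ``twisted distributivity'' identities for the operation $*$. For the intersection, recall that an ideal of $A$ is exactly a subset that is simultaneously a normal subgroup of $(A,+)$, a normal subgroup of $(A,\circ)$, and stable under every $\lambda_a$. Each of these three properties passes to arbitrary intersections --- for the last one because $\lambda_a\big(\bigcap_{\lambda}I_\lambda\big)\subseteq\bigcap_{\lambda}\lambda_a(I_\lambda)\subseteq\bigcap_{\lambda}I_\lambda$ --- so $\bigcap_{\lambda\in\Lambda}I_\lambda$ is an ideal. (One could equally invoke Proposition \ref{ali}: the intersection is a normal subgroup of $(A,+)$, and $A*\big(\bigcap_\lambda I_\lambda\big)\subseteq\bigcap_\lambda(A*I_\lambda)\subseteq\bigcap_\lambda I_\lambda$, and symmetrically for $*A$.)

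For $I+J$, I would first record the elementary fact from group theory that, $I$ and $J$ being normal subgroups of $(A,+)$, the set $\{\, i+j \mid i\in I,\, j\in J \,\}$ is already a normal subgroup of $(A,+)$; it therefore coincides with $I+J$, and in particular $I+J$ is a normal subgroup of $(A,+)$ containing both $I$ and $J$. By Proposition \ref{ali} it then suffices to check $A*(I+J)\subseteq I+J$ and $(I+J)*A\subseteq I+J$. For the first, fix $a\in A$, $i\in I$, $j\in J$; expanding $a*(i+j)=\lambda_a(i+j)-(i+j)$ and using additivity of $\lambda_a$ gives $a*(i+j)=(a*i)+i+(a*j)-i$, where $a*i\in I$ and $a*j\in J$ since $I$ and $J$ are ideals, while $i+(a*j)-i\in J$ by normality of $J$; hence $a*(i+j)\in I+J$. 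For the second, I would rewrite the additive sum multiplicatively as $i+j=i\circ\lambda_{i^{-1}}(j)$ (a standard skew-brace identity), note that $c:=\lambda_{i^{-1}}(j)\in J$ because the ideal $J$ is stable under $\lambda_{i^{-1}}$, and then apply the identity $(x\circ y)*a=x*(y*a)+(y*a)+(x*a)$ with $x=i$, $y=c$: each of the three summands $i*(c*a)$, $c*a$, $i*a$ lies in $I$ or in $J$ because $I$ and $J$ are ideals, so their sum lies in the subgroup $I+J$. This yields $(I+J)*A\subseteq I+J$, and the lemma follows.

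The step I expect to be the real point is the containment $(I+J)*A\subseteq I+J$. Whereas $a*(b+c)$ decomposes at once via the additivity of $\lambda_a$, the expression $(i+j)*a$ has its sum in the slot where $*$ is only twisted-distributive, so one must trade the additive sum for a multiplicative product and invoke the fact --- immediate from the axioms, since $\lambda_a^{-1}=\lambda_{a^{-1}}$ with $a^{-1}\in A$ --- that an ideal is closed under $\lambda_a^{-1}$ as well as under $\lambda_a$. The identities for $*$ used above (the expansion of $a*(b+c)$ and the formula for $(x\circ y)*a$) are standard consequences of the skew brace axioms and may be quoted from \cite{CSV19,GV17}; everything else is bookkeeping with normal subgroups.
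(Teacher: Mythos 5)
Your proof is correct, and for the intersection and for the containment $A*(I+J)\subseteq I+J$ it is essentially the paper's argument: both establish that $I+J$ is a normal subgroup of $(A,+)$, invoke Proposition \ref{ali}, and expand $a*(i+j)$ via the additivity of $\lambda_a$. The one place you genuinely diverge is the containment $(I+J)*A\subseteq I+J$: the paper simply cites \cite[Lemma 3.7]{KSV21} for it, whereas you prove it from scratch by writing $i+j=i\circ\lambda_{i^{-1}}(j)$ with $\lambda_{i^{-1}}(j)\in J$ and then applying the identity $(x\circ y)*a=x*(y*a)+(y*a)+(x*a)$; all the identities you use ($\lambda_{a}^{-1}=\lambda_{a^{-1}}$, the multiplicative rewriting of a sum, the expansion of $(x\circ y)*a$) are standard and your membership checks ($c*a\in J$, $i*(c*a)\in I$, $i*a\in I$) are valid since $I$ and $J$ are ideals and $I+J$ is a subgroup containing both. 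So your version is self-contained where the paper outsources the key step, at the cost of quoting two skew-brace identities that you should either verify in a line each or pin to a precise reference; you have also correctly identified that this is the genuinely nontrivial half, since $*$ distributes over sums only in its second argument.
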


\begin{proof}
Since $\{I_{\lambda}\}_{\lambda \in \Lambda}$ is a family of ideals, $\lambda_a(x)$ is in $\bigcap_{\lambda \in \Lambda} I_{\lambda}$ for all $a\in A$ and for all $x\in \bigcap_{\lambda \in \Lambda} I_{\lambda}.$ Since arbitrary intersection of normal subgroups is normal, it follows that $\bigcap_{\lambda \in \Lambda} I_{\lambda}$ is an ideal of $A$.
To prove that $I+J$ is an ideal, we first observe that $I+J$ is a normal subgroup of $(A,+)$. By Proposition \ref{ali}, it is sufficient to show that  $A*(I+J)\subseteq I+J$ and $(I+J)* A\subseteq I+J$. To prove the first inclusion, let $a\in A$, $i\in I$, $j\in J.$ Then we have
\begin{equation*}
\begin{split}
a*(i+j) &= -a+a\circ (i+j)-i-j\\
&=-a+a\circ i-a+a\circ j-i-j\\
&= \lambda_a(i)+\lambda_a(j) - (i+j) \in I+J, 
\end{split}
\end{equation*}
proving $A*(I+J)\subseteq I+J$. The proof of the containment $(I+J)* A\subseteq I+J$ follows from \textsc{Konovalov, Smoktunowicz, \& Vendramin}  \cite[Lemma 3.7]{KSV21}. 
\end{proof}
The binary sum of ideals of a skew brace can be extended to an ``infinite sum'', which we shall see in the next corollary. But before that, let us first define this infinite sum of ideals of a skew brace $A$. If $\{I_{\lambda}\}_{\lambda \in \Lambda}$ is a family of ideals of $A$, then by the \emph{sum} $\sum_{\lambda\in \Lambda} I_{\lambda}$ of ideals of $A$, we mean each term of the sum is over finitely many indices. 

\begin{corollary}\label{insum}
 If $\{I_{\lambda}\}_{\lambda \in \Lambda}$ is a family of ideals of $A$, then $\sum_{\lambda\in \Lambda} I_{\lambda}$ is also an ideal of $A$. 
\end{corollary}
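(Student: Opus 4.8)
The plan is to realise $\sum_{\lambda\in\Lambda}I_{\lambda}$ as a directed union of finite partial sums and then to reduce everything to Lemma \ref{inid}. First I would fix notation: for each finite subset $F\subseteq\Lambda$ put $I_{F}:=\sum_{\lambda\in F}I_{\lambda}$, understood as the iterated binary sum $I_{\lambda_{1}}+\cdots+I_{\lambda_{n}}$ when $F=\{\lambda_{1},\dots,\lambda_{n}\}$ (and $I_{\emptyset}:=0$). A straightforward induction on $\lvert F\rvert$, applying the second assertion of Lemma \ref{inid} at each step, shows that every $I_{F}$ is an ideal of $A$. By the definition of the infinite sum recalled just before the statement, one has $\sum_{\lambda\in\Lambda}I_{\lambda}=\bigcup_{F}I_{F}$, the union being taken over all finite $F\subseteq\Lambda$.

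Next I would note that the family $\{I_{F}\}_{F}$ is directed by inclusion: given finite $F,G\subseteq\Lambda$, both $I_{F}$ and $I_{G}$ are contained in $I_{F\cup G}$, which is again a member of the family. Hence it suffices to establish the elementary fact that the union of a directed family of ideals of $A$ is an ideal of $A$. Writing $S=\bigcup_{F}I_{F}$, closure under $+$, under additive inverses, under $\circ$, and under multiplicative inverses is immediate: any two elements of $S$ lie in a common $I_{F}$, which is a subgroup for both operations. Normality of $S$ in $(A,+)$ and in $(A,\circ)$ follows because the conjugate of an element of some $I_{F}$ by an element of $A$ stays in $I_{F}$, and finally $\lambda_{a}(S)\subseteq S$ since $\lambda_{a}(I_{F})\subseteq I_{F}$ for every finite $F$ and every $a\in A$. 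This shows $S$ is an ideal, which is the claim. As a shortcut one could instead, once $S$ is known to be a normal subgroup of $(A,+)$, invoke Proposition \ref{ali} and only check $A*S\subseteq S$ and $S*A\subseteq S$, each of which again reduces by directedness to the corresponding inclusion for a single $I_{F}$.

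I do not anticipate a genuine obstacle here; the only point requiring a little care is conceptual rather than technical, namely that the ``infinite sum'' is \emph{by definition} the union of its finite partial sums, so no limiting or well-ordering considerations enter, and it is precisely directedness that lets every verification be performed one finite stage at a time.
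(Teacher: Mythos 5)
Your argument is correct and is essentially the intended one: the paper states this as an immediate consequence of Lemma \ref{inid} and supplies no proof, and your reduction to finite partial sums (each an ideal by induction on the binary case) followed by the observation that a directed union of ideals is an ideal is exactly the standard way to fill in that gap. No issues.
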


\begin{remark}
\emph{	
$(1).$ Recall that  an associative ring $(R, +, \circ)$ is called a \emph{radical ring} if  $(R, *)$ is a group, where the operation $*$ is defined by $a*b=a+b-a\circ b.$  \textsc{Rump} \cite{R07} showed that $(R, +, *)$ is an example of a (left) \emph{brace}. It is now natural to ask what should be ``ring-type'' example of a skew brace, and here the problem starts. Since the addition operation of a ring is commutative, we can not expect an example even from a noncommutative ring. We rather need to consider a near-ring and apply the above method to have the expected example.} 

\emph{$(2).$ Note that if a ring $R$ is a radical ring, then every element of $R$ is quasi-regular (see \textsc{Jacobson} \cite{J56}) and such a ring can not have a multiplicative identity $1$. Indeed, for $x\in R$, 
$$x*1=x+1-x\circ 1=x+1-x=1\neq 0,$$
a contradiction. Therefore, it follows that we can not have an ``unital'' brace or a ``unital'' skew brace (unless the braces or the skew braces are trivial). Furthermore, we observe that the identity element $e$ of a skew brace $A$ can not play the role of an unital element. Otherwise, $A$ will not have any proper ideals (as by definition $e\in I$ for all ideals $I$ of $A$).}
\end{remark}

\section{Spectra of skew braces}

There are various ways to define prime ideals of  a skew brace. Here we describe three of them. 

\begin{definition}\label{ksv} 
Let $I$ and $J$ be two ideals of a skew brace $A.$ A proper ideal $P$ of $A$ is called \emph{prime} if   $$I* J\equiv 0\,(\mathrm{mod} \;\,P) \implies I\equiv 0\,(\mathrm{mod} \;\,P)\;\; \text{or}\;\; J\equiv 0\,(\mathrm{mod} \;\,P),$$ where $I* J$ is the additive subgroup of $A$ generated by $\{i* j \mid i\in I,\, j\in J\}.$ 
\end{definition}
 
The Definition \ref{ksv} above follows from \textsc{Konovalov, Smoktunowicz, \& Vendramin} \cite[Definition 5.8]{KSV21}, whereas Definition \ref{bfp} below is based on \textsc{Bourn, Facchini, \& Pompili} \cite[Proposition 3.18]{BFP22}, and has been proposed to the second author by \textsc{Alberto Facchini} in a private communication.
 
\begin{definition}
\label{bfp} 
\emph{Let $I$ and $J$ be any two ideals of $A.$ A proper ideal $P$ of $A$ is called \emph{prime} if  $$[I, J]\subseteq P \implies I\subseteq P\;\; \text{or}\;\; J\subseteq P,$$ where $[I, J]$ is the (Huq) commutator of the ideals $I$ and $J$  generated by the union of the following three
sets: 
\begin{enumerate}[\upshape (i)]
\item\label{ij+} $\{ i + j - i - j \mid i \in I, j \in J \};$  
\item\label{ijd} $\{ i\circ  j \circ  i\inv\circ   j\inv \mid i \in I, j \in J \};$ 
\item $\{ i\circ  j - j - i\mid \in I, j \in J\},$ 
\end{enumerate}
where the sets in (\ref{ij+}) and (\ref{ijd}) respectively generate  the commutators
$[I, J]_{(A,+)}$ and $[I, J]_{(A,\circ)}$ of the normal subgroups $I$ and $J$ of the groups $(A, +)$ and  $(A, \circ).$ }
\end{definition}

Finally, we propose the following definition of a prime ideal of a skew brace.

\begin{definition}
\label{pri} 
\emph{A proper ideal $P$ of $A$ is called \emph{prime} if, for any subsets I and J of A, $$I* J\subseteq P \implies I\subseteq P\;\; \text{or}\;\; J\subseteq P,$$ where $I* J=\{i* j \mid i\in I, \,j\in J\}$. }
\end{definition}  
 
We denote the set of all prime ideals of $A$ by $\mathrm{Spec}\,A$. For the rest of the work, by a prime ideal of a skew brace, we shall mean it in the sense of Definition \ref{pri} and this definition helps us in proving the preservation of prime ideals under inverse image of a skew brace homomorphism (see Proposition \ref{conmap}(\ref{sbcm})). Although in Definition \ref{pri} we have considered $I$ and $J$ just as subsets of $A$, nevertheless, we have the following implication, where with an abuse of notation, we shall also write $I*J$ even when $I$ and $J$ are ideals of a skew brace $A$.

\begin{proposition}\label{npd}
Suppose a skew brace $A$ is a skew brace and $P$ is a prime ideal of $A$.
If $I$ and $J$ are ideals of $A$, then $I*J\subseteq P$ implies $I\subseteq P$ or $J\subseteq P,$ where $*$ is the operation as in Definition \ref{pri} and $I*J$ is the ideal generated by the set $\{i*j\mid i\in I, j\in J\}.$
\end{proposition}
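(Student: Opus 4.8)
The plan is to reduce the statement immediately to Definition \ref{pri}, exploiting the single fact that $P$ is an ideal. Write $S = \{\, i * j \mid i \in I,\ j \in J \,\}$ for the generating set, so that the ideal-theoretic product appearing in the proposition is $I * J = \langle S \rangle$, whereas the object appearing in Definition \ref{pri} (with $I,J$ read as subsets of $A$) is exactly $S$.

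First I would record the elementary equivalence
\[
\langle S \rangle \subseteq P \quad\Longleftrightarrow\quad S \subseteq P ,
\]
which holds precisely because $P$ is itself an ideal of $A$: the inclusion $S \subseteq \langle S \rangle$ yields the forward implication, and since $\langle S \rangle$ is by definition the intersection of all ideals of $A$ containing $S$ (such ideals exist, e.g. $A$ itself, and intersections of ideals are ideals by Lemma \ref{inid}), any ideal $P$ with $S \subseteq P$ already contains $\langle S \rangle$.

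Given this, the argument is one line. The hypothesis of the proposition is $I * J \subseteq P$ with $I*J = \langle S\rangle$, so $\langle S\rangle \subseteq P$, hence $S \subseteq P$ by the displayed equivalence; that is, $I * J \subseteq P$ in the set-theoretic sense of Definition \ref{pri}. Now regarding $I$ and $J$ merely as subsets of $A$ and invoking primality of $P$ (Definition \ref{pri}), we conclude $I \subseteq P$ or $J \subseteq P$, as required.

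There is no genuine obstacle here; the only point demanding a little care is the deliberate overloading of the symbol $I*J$ — the set of $*$-products versus the ideal it generates — and the remark that, because $P$ is an ideal, these two readings impose the very same containment condition on $P$. It is worth emphasising that the proposition asserts only this direction, namely deducing the ideal-theoretic formulation from the subset formulation built into Definition \ref{pri}; it is exactly this asymmetry that keeps the proof painless, the converse passage not being needed.
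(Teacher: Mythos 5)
Your proof is correct and follows essentially the same route as the paper: both arguments observe that the generating set $\{i*j \mid i\in I,\ j\in J\}$ is contained in the ideal it generates, hence in $P$, and then invoke Definition \ref{pri} with $I$ and $J$ read as mere subsets. Your extra remark that the containment $\langle S\rangle\subseteq P$ is actually equivalent to $S\subseteq P$ (because $P$ is an ideal) is a harmless elaboration of the one inclusion the paper uses.
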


\begin{proof}
Suppose $\hat{I}$ and $\hat{J}$ respectively denote the underlying sets of the ideals $I$ and $J$.  Then
$$\hat{I}*\hat{J}\subseteq I*J\subseteq P.$$
Since $P$ is prime, either $\hat{I}\subseteq P$ or $\hat{J}\subseteq P$, and that implies either 
$I=\langle \hat{I} \rangle \subseteq P$ or $J=\langle \hat{J} \rangle \subseteq P$.
\end{proof}

%\begin{remark} Following \textsc{Brown \& McCoy} \cite{BM58} we can say that it does not matter whether the product $I*J$ of ideals $I$ and $J$ is defined to be the set of all finite sums $\sum i_{\alpha} j_{\alpha}$ ($i_{\alpha}\in I$, $j_{\alpha}\in J$), or the smallest ideal of $A$ containing all products $i_{\alpha} j_{\alpha}$, or merely the set of all these products. For rings, \textsc{Behrens} used the second of these definitions in \cite{B56} and \textsc{Amitsur}  the third in \cite{A54}. \end{remark}

\begin{lemma}\label{dint}  If $I$ and $J$ are any two ideals of a skew brace $A$, then $I* J\subseteq I\cap J.$
\end{lemma}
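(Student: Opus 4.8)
The plan is to reduce the statement entirely to Proposition \ref{ali}, which characterises an ideal $K$ of $A$ by the two absorption conditions $A*K\subseteq K$ and $K*A\subseteq K$. The point that makes this work is simply that an ideal is in particular a \emph{subset} of $A$, so one is free to plug $A$ itself into either argument of the $*$-product.

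Concretely, I would fix arbitrary $i\in I$ and $j\in J$ and look at $i*j=\lambda_i(j)-j=-i+i\circ j-j$. Viewing $j$ as an element of $A$, the inclusion $I*A\subseteq I$ from Proposition \ref{ali} gives $i*j\in I$; symmetrically, viewing $i$ as an element of $A$, the inclusion $A*J\subseteq J$ gives $i*j\in J$. Hence $i*j\in I\cap J$ for all such $i,j$. Since $I\cap J$ is itself an ideal by Lemma \ref{inid}, and in particular an additive subgroup, the set $\{\,i*j\mid i\in I,\ j\in J\,\}$ being contained in $I\cap J$ forces both its additive span and the ideal it generates to lie in $I\cap J$ as well; this settles the inclusion under either reading of the symbol $I*J$.

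I do not expect any real obstacle here: the lemma is an immediate corollary of Proposition \ref{ali} once the ``subset of $A$'' trick is noticed, and no manipulation of the skew-brace identity itself is required. The only thing to be a little careful about is bookkeeping — making sure the first application of Proposition \ref{ali} is to the ideal $I$ (with $A$ in the right slot) and the second to the ideal $J$ (with $A$ in the left slot), so that the conclusion lands in the intersection rather than in just one of the two ideals.
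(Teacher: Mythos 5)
Your argument is correct, but it takes a different route from the paper. You reduce everything to Proposition \ref{ali}: since $J\subseteq A$, each $i*j$ lies in $I*A\subseteq I$, and since $I\subseteq A$, it also lies in $A*J\subseteq J$, so $i*j\in I\cap J$; your closing remark that $I\cap J$ is an ideal (Lemma \ref{inid}) then handles either reading of $I*J$ (set of products, additive span, or generated ideal). The paper instead proves both containments by hand from the definition of an ideal: membership in $J$ comes from $i*j=\lambda_i(j)-j$ together with $\lambda_i(J)\subseteq J$, and membership in $I$ comes from the rewriting $i*j=-i+j+\lambda_j\left(j\inv\circ i\circ j\right)-j$, which uses normality of $I$ in $(A,\circ)$, the $\lambda$-invariance of $I$, and normality of $I$ in $(A,+)$. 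The two arguments have essentially the same mathematical content — the paper's computation is in effect a re-derivation of the $I*A\subseteq I$ half of Proposition \ref{ali} — so what your version buys is brevity at the cost of leaning on a proposition the paper quotes from the literature without proof, while the paper's version is self-contained at the level of the definitions and makes visible exactly which parts of the ideal axioms are used for each containment. Either is acceptable; just make sure you only invoke the ``only if'' direction of Proposition \ref{ali}, which is the direction that holds for an ideal.
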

  
\begin{proof}
If $i* j\in I* J,$ then $i* j=\lambda_i(j)-j \in J$ as $J$ is an ideal. On the other hand,
\begin{equation*}
i* j 
= -i + i\circ  j -j
= -i + j\circ \left(j\inv\circ   i\circ  j\right) -j 
= -i + j + \lambda_{j}\left(j\inv\circ  i \circ j\right) -j\in I.\qedhere 
\end{equation*}  
\end{proof}
  
\begin{proposition}\label{mullat}
Suppose $A$ is a skew brace. Then $(\mathrm{Idl}\,A, \subseteq, *)$ is a multiplicative lattice.
\end{proposition}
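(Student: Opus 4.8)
The plan is to verify the axioms of a multiplicative lattice for the triple $(\mathrm{Idl}\,A, \subseteq, *)$. Recall that a multiplicative lattice is a complete lattice $L$ equipped with a commutative (or at least associative) multiplication that is associative, has the top element as identity or at least distributes over arbitrary joins, and that is compatible with the order. I would first establish that $(\mathrm{Idl}\,A,\subseteq)$ is a complete lattice: arbitrary meets are intersections, which are ideals by Lemma \ref{inid}, and arbitrary joins are given by the sums $\sum_{\lambda} I_\lambda$, which are ideals by Corollary \ref{insum}. The bottom element is the zero ideal $0=\{e\}$ and the top element is $A$ itself. This part is essentially a bookkeeping exercise citing the results already proved.

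Next I would analyse the multiplication $I*J := \langle\, i*j \mid i\in I,\, j\in J\,\rangle$, the ideal generated by the pointwise products, which by the abuse of notation fixed just before Proposition \ref{npd} is well defined on $\mathrm{Idl}\,A$. The key structural facts to check are: (a) monotonicity, i.e. $I\subseteq I'$ and $J\subseteq J'$ imply $I*J\subseteq I'*J'$, which is immediate from the definition; (b) that $I*J\subseteq I\cap J$, which is precisely Lemma \ref{dint} (after passing to the generated ideal); hence in particular $I*J\subseteq I$ and $I*J\subseteq J$, so $A*J\subseteq J$ and $I*A\subseteq I$, and $A$ behaves as a two-sided ``absorbing-from-above'' top rather than a strict identity. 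The remaining axiom typically demanded is distributivity over arbitrary joins: $I*\bigl(\sum_\lambda J_\lambda\bigr) = \sum_\lambda (I*J_\lambda)$ and symmetrically on the other side. The inclusion $\supseteq$ follows from monotonicity; for $\subseteq$ one expands a generator $i*\bigl(\sum_{k} j_k\bigr)$ using the identity $a*(b+c)=\lambda_a(b)+\lambda_a(c)-(b+c)$ established in the proof of Lemma \ref{inid}, together with an induction on the number of summands, to rewrite it inside $\sum_\lambda(I*J_\lambda)$; the right-hand distributivity uses \cite[Lemma 3.7]{KSV21} as in the proof of Lemma \ref{inid}.

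The main obstacle I anticipate is associativity of $*$ on ideals, namely $(I*J)*K = I*(J*K)$. Unlike a ring multiplication, the skew-brace operation $a*b=-a+a\circ b-b$ is not associative on elements, so one cannot argue generator-by-generator in a naive way; instead one must work at the level of generated ideals and exploit that $I*J$, $J*K$ are themselves ideals together with the compatibility relations between $*$, $+$, $\circ$ and the $\lambda$-maps. I would try to prove the two inclusions separately, reducing each to showing that a generator of one side lies in the other, and pushing the computation through the identity $a*(b+c)=\lambda_a(b)+\lambda_a(c)-(b+c)$ and the fact (Lemma \ref{dint}) that $J*K\subseteq K$, which lets one absorb one factor. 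If full associativity turns out to be delicate, the fallback is that many sources only require a multiplicative lattice to have a commutative, order-compatible, join-distributive multiplication; I would state precisely which axiom-set is being used and verify exactly those, flagging associativity as the substantive point.
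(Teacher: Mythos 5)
Your first two paragraphs already contain the paper's entire proof: completeness of $(\mathrm{Idl}\,A,\subseteq)$ follows from Lemma \ref{inid} and Corollary \ref{insum} (meets are intersections, joins are sums), and multiplicativity is precisely the containment $I*J\subseteq I\cap J$ supplied by Lemma \ref{dint} (which passes to the generated ideal since $I\cap J$ is itself an ideal). What you were missing is which axiom set is in force. The notion of multiplicative lattice the paper needs is the one of Facchini, Finocchiaro, and Janelidze \cite{FFJ22} --- this is the definition under which Theorem \ref{spida} is later invoked --- and it requires only a complete lattice equipped with a binary operation satisfying $xy\le x\wedge y$: no associativity, no commutativity, no unit, and no distributivity over joins. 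Consequently the entire program of your third paragraph is not part of the statement, and it is fortunate that it is not, because it could not be carried out: the paper remarks explicitly just before Definition \ref{radi} that $*$ is neither associative nor commutative, and there is no reason to expect $(I*J)*K=I*(J*K)$ to hold for ideals; even your fallback axiom set (commutative, join-distributive) asks for more than is true or needed, since $I*J\ne J*I$ in general. Your monotonicity observation and the left-distributivity computation via $a*(b+c)=\lambda_a(b)+\lambda_a(c)-(b+c)$ are correct but superfluous here. One small point in your favour: you correctly identify the bottom element of $\mathrm{Idl}\,A$ as the zero ideal $\{e\}$, whereas the paper's own proof misstates it as the empty set, which is not an ideal.
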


\begin{proof}
Notice that under subset inclusion $\subseteq$ relations, the emptyset and $A$ are respectively the bottom and top elements of $\mathrm{Idl}\,A$. That the pair $(\mathrm{Idl}\,A, \subseteq)$ is a complete lattice now follows from Lemma \ref{inid} and Corollary \ref{insum}, whereas by Lemma \ref{dint} it follows that the complete lattice $(\mathrm{Idl}\,A, *)$ is also multiplicative.
\end{proof}

In a skew brace, maximal ideals need not be prime. For example, in a finite two sided brace $A$ with more than one element, each maximal ideal of $A$ cannot be prime. What we have instead is the following characterisation (see \textsc{Jespers, Kubat, Van Antwerpen, \& L. Vendramin} \cite[Proposition 3.6]{JKAV21}) of maximal ideals that are prime.

\begin{proposition}\label{minp}
 A maximal ideal $M$ of $A$ is  prime if and only if $A^{\scriptscriptstyle 2}\nsubseteq M,$ where $A^{\scriptscriptstyle 2}=A*A.$
\end{proposition}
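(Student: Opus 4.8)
The plan is to prove the two implications of the biconditional separately; the forward direction is a one-line consequence of Definition \ref{pri}, and the reverse direction is where the work lies. For the forward direction, suppose $M$ is prime. Since $A$ is in particular a subset of itself, we may apply Definition \ref{pri} with $I=J=A$: if $A^{2}=A*A\subseteq M$, then $A\subseteq M$, contradicting that $M$ is proper. Hence $A^{2}\nsubseteq M$.

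For the reverse direction, assume $M$ is maximal and $A^{2}\nsubseteq M$, and let $I,J$ be subsets of $A$ with $I*J\subseteq M$. Supposing, for a contradiction, that neither $I$ nor $J$ lies in $M$, the ideals $\langle I\rangle$ and $\langle J\rangle$ do not lie in $M$ either, so by Lemma \ref{inid} the ideals $\langle I\rangle+M$ and $\langle J\rangle+M$ properly contain $M$; maximality of $M$ then forces $\langle I\rangle+M=A=\langle J\rangle+M$. The next step is a ``distributivity modulo an ideal'' computation: expanding $(\langle I\rangle+M)*(\langle J\rangle+M)$ by means of the defining skew brace identity (used in the displayed computation in the proof of Lemma \ref{inid}) and a similar calculation for a sum in the left argument, while repeatedly using that $M$ is normal in $(A,+)$, normal in $(A,\circ)$ and $\lambda$-stable in order to absorb conjugates and $\lambda$-images back into $M$, one obtains $(\langle I\rangle+M)*(\langle J\rangle+M)\subseteq\langle I\rangle*\langle J\rangle+\langle I\rangle*M+M*\langle J\rangle+M*M$. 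By Lemma \ref{dint} the last three summands are contained in $M$; granting for the moment that $I*J\subseteq M$ forces $\langle I\rangle*\langle J\rangle\subseteq M$, this gives $A*A\subseteq M$, hence $A^{2}\subseteq M$ (as $M$ is an ideal containing the set $\{a*b\mid a,b\in A\}$), contradicting $A^{2}\nsubseteq M$. Therefore $I\subseteq M$ or $J\subseteq M$, i.e. $M$ is prime.

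The step I expect to be the main obstacle is the claim, used above, that the set inclusion $I*J\subseteq M$ propagates to the ideal inclusion $\langle I\rangle*\langle J\rangle\subseteq M$; this is precisely where the subset formulation of Definition \ref{pri} has to be reconciled with ideal-theoretic manipulations. I would handle it by passing to the quotient skew brace $B=A/M$: by maximality of $M$ the only ideals of $B$ are $0$ and $B$, and since $A^{2}\nsubseteq M$ one has $B*B=B\neq0$, so the whole proposition reduces to verifying that $0$ is a prime ideal of $B$. Concretely, for $y$ in the ideal of $B$ generated by the image of $J$ one would analyse $N_{y}=\{x\in B\mid x*y=0\}=\{x\mid\lambda_{x}(y)=y\}$ and try to show, from the skew brace relations, that $N_{y}$ is large enough to contain the ideal generated by the image of $I$; establishing this — in effect, that a nonzero $*$-annihilated subset of a skew brace generates a nonzero $*$-annihilated ideal — is the crux, and is where the structure of $\mathrm{Idl}\,A$ as a multiplicative lattice (Proposition \ref{mullat}) and the ideal theory of skew braces developed in \cite{CSV19, KSV21} would be brought to bear.
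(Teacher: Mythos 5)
Your forward direction is correct and is the easy half. For the converse, though, be aware that the paper itself offers no proof: it cites \cite{JKAV21}, Proposition~3.6, which is stated for the ideal-theoretic notion of prime (in the spirit of Definition~\ref{ksv}), not for the subset-based Definition~\ref{pri} actually in force here. So your attempt has to stand on its own, and it has two genuine gaps. First, the ``distributivity modulo an ideal'' expansion of $(\langle I\rangle+M)*(\langle J\rangle+M)$ does not go through in the left argument: the skew brace identity only gives $a\circ(b+c)=a\circ b-a+a\circ c$, i.e.\ $*$ is (twistedly) additive in its \emph{right} argument, and there is no identity expanding $(a+b)\circ c$ or $(a+b)*c$. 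This is exactly why the paper's own proof of Lemma~\ref{inid} handles $A*(I+J)$ by direct computation but defers $(I+J)*A$ to \cite{KSV21}. The standard fix is to use that for an ideal $K$ one has $a\circ K=a+K$, hence $\langle I\rangle+M=\langle I\rangle\circ M$, and then expand $(i\circ m)*y$ via the identity $(a\circ b)*c=a*(b*c)+b*c+a*c$ from \cite{CSV19}, absorbing $M*A$ and $A*M$ into $M$ by Lemma~\ref{dint}; your argument as written skips this entirely.

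Second, and more seriously, even after that repair the computation reduces $A*A\subseteq M$ to the claim that $i*j\in M$ for all $i\in\langle I\rangle$, $j\in\langle J\rangle$, i.e.\ that the set inclusion $I*J\subseteq M$ propagates to the generated ideals. You correctly identify this as the crux, but you do not prove it, and your proposed route through the quotient $B=A/M$ does not close it: the set $N_y=\{x\in B\mid\lambda_x(y)=y\}$ is a subgroup of $(B,\circ)$ because $\lambda$ is a homomorphism from $(B,\circ)$ to $\operatorname{Aut}(B,+)$, but there is no reason for it to be closed under $+$, normal, or $\lambda$-stable, so it need not contain the ideal generated by the image of $I$. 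Note that Proposition~\ref{npd} gives only the implication from the subset definition to the ideal one, not the converse you need. As it stands the converse direction is an outline with its key step unestablished, and it is not clear that the step is even true for Definition~\ref{pri}; a complete proof would either have to establish this propagation or argue directly with ideals and then justify descent to arbitrary subsets.
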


Since the operation $*$ is neither associative nor commutative, an expression like $a^{\scriptscriptstyle n}=a*a^{\scriptscriptstyle n-1}$ does not make sense for us. Therefore, we cannot adapt the ``elementwise'' definition of radical of an ideal as we have for commutative rings. Following the definition of the radical of an ideal of a noncommutative ring (see \textsc{Lam} \cite[Theorem 10.7]{L01}), we propose  
 
\begin{definition}\label{radi}
\emph{The \emph{radical} of an ideal $I$ in a skew brace $A$, denoted by $\mathrm{Rad}\,I$, is defined as $$\mathrm{Rad}\,I=\bigcap_{I\subseteq P\,\in\, \mathrm{Spec}\,A} P.$$ 
The \emph{nil radical} $\mathrm{Nil}\,A$ of $A$ is the radical of the zero ideal (see Section 2) of $A$. So, $\mathrm{Nil}\,A$ is the intersection of all prime ideals of $A$.}
\end{definition}  

\section{Spectral topology}

Let $S$ be a subset of a skew brace $A$. If $H(S)=\{ P\in \mathrm{Spec}\,A \mid S\subseteq P\},$ then it is easy to see that $H(S)=H(\langle S \rangle).$ Therefore, it is sufficient to consider the set of all ideals of a skew brace brace $A$ to define closed sets of spectral topology on  $\mathrm{Spec}\,A$. The terminology ``spectral topology'' was first used by \textsc{Grothendieck} in \cite{G60}.
 
\begin{definition}\label{ztd}
\emph{A \emph{spectral}  topology on $\mathrm{Spec}\,A$ is imposed by considering the collection of sets $\{H(I)\}_{I\in \mathrm{Idl}\,A}$ as closed sets, where 
$$
H(I)=\{P\in \mathrm{Spec}\,A\mid I \subseteq P\}. 
$$}
\end{definition}

The following proposition shows that Definition \ref{ztd} indeed makes sense for skew braces.

\begin{proposition}\label{ztp}
The collection $\{H(I)\}_{I\in \mathrm{Idl}\,A}$ of sets  satisfies the following properties:
\begin{enumerate}[\upshape (i)]
\item\label{vza} $H(A)=\emptyset$ and $H(0)=\mathrm{Spec}\,A.$
\item\label{uis} $H(I)\cup H(J)= H(I\cap J)=H(I* J)$ for all $I, J\in \mathrm{Idl}\,A.$
\item\label{ais} $
\bigcap_{\lambda \in \Lambda}H(I_{\lambda})= H\left(\sum_{\lambda\in \Lambda} I_{\lambda}\right)
$ for all $I_{\lambda}\in \mathrm{Idl}\,A$ and $\lambda \in \Lambda.$
\end{enumerate}

\end{proposition}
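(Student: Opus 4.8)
The plan is to check the three items in turn, each reducing to elementary manipulations of the inclusion relation combined with the two structural facts already at hand: Lemma~\ref{dint}, which gives $I*J\subseteq I\cap J$, and Proposition~\ref{npd}, which says that for ideals $I,J$ and a prime $P$, $I*J\subseteq P$ forces $I\subseteq P$ or $J\subseteq P$. No new computation in the skew brace is needed.

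For (\ref{vza}): every prime ideal is proper by Definition~\ref{pri}, so none of them contains $A$, whence $H(A)=\emptyset$; and $0=\{e\}$ is contained in every ideal of $A$, in particular in every prime ideal, so $H(0)=\mathrm{Spec}\,A$.

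For (\ref{uis}): I would prove the chain of inclusions
$$H(I)\cup H(J)\ \subseteq\ H(I\cap J)\ \subseteq\ H(I*J)\ \subseteq\ H(I)\cup H(J),$$
which forces all three sets to coincide and simultaneously yields both displayed equalities. The first inclusion is immediate from $I\cap J\subseteq I$ and $I\cap J\subseteq J$. The second follows from Lemma~\ref{dint}, since $I*J\subseteq I\cap J\subseteq P$ whenever $I\cap J\subseteq P$. The third is the only place where primeness is used: if $I*J\subseteq P$ with $P$ prime, Proposition~\ref{npd} gives $I\subseteq P$ or $J\subseteq P$, i.e.\ $P\in H(I)\cup H(J)$.

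For (\ref{ais}): a prime $P$ lies in $\bigcap_{\lambda}H(I_{\lambda})$ iff $I_{\lambda}\subseteq P$ for every $\lambda$. Since $P$ is an additive subgroup of $(A,+)$ and, by the definition of $\sum_{\lambda\in\Lambda}I_{\lambda}$ preceding Corollary~\ref{insum}, every element of $\sum_{\lambda\in\Lambda}I_{\lambda}$ is a finite sum of elements taken from finitely many of the $I_{\lambda}$, this is equivalent to $\sum_{\lambda\in\Lambda}I_{\lambda}\subseteq P$; the reverse implication is trivial because $I_{\lambda}\subseteq\sum_{\lambda\in\Lambda}I_{\lambda}$ for each $\lambda$. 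Hence $\bigcap_{\lambda}H(I_{\lambda})=H\!\left(\sum_{\lambda\in\Lambda}I_{\lambda}\right)$. The only non-formal ingredient in the whole argument is the identity $H(I\cap J)=H(I*J)$, which genuinely rests on the new notion of prime ideal through Proposition~\ref{npd}; I do not anticipate any real obstacle beyond this observation.
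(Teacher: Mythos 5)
Your proposal is correct and follows essentially the same route as the paper: item (\ref{vza}) from properness of primes and $e\in P$, item (\ref{uis}) from Lemma~\ref{dint} together with Proposition~\ref{npd}, and item (\ref{ais}) from the fact that a prime contains every $I_{\lambda}$ iff it contains their sum. Your organization of (\ref{uis}) as a single cyclic chain of three inclusions is a slightly cleaner packaging of the same two equalities the paper proves separately, but the underlying argument is identical.
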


\begin{proof} 
(i). Since by Definition \ref{pri}, $A\notin \mathrm{Spec}\,A,$ we have $H(A)=\emptyset.$ Since $\{e\}\subseteq P$ for all $P$ in $\mathrm{Spec}\,A$, we have  $H(0)=\mathrm{Spec}\,A.$ 
 
(ii). By Definition \ref{ztd}, it follows that $H(I)\cup H(J)\subseteq H(I\cap J)$ for any two ideals $I$ and $J$ of $A.$
On the other hand,  
$ H(I)\cup H(J)\supseteq H(I\cap J)$
follows by Lemma \ref{dint}. If $P\in H(I*J),$ then $I*J\subseteq P$, and by Proposition \ref{npd}, $I\subseteq P$ or $J\subseteq P,$ and that gives $P\in H(I)\cup H(J)$. The inclusion $H(I*J)\supseteq H(I)\cup H(J)$ follows from Lemma \ref{dint}.   

(iii). Note that if $\{I_{\lambda}\}_{\lambda \in \Lambda}$ is a family of ideals of $A$, then by Corollary \ref{insum}, $\sum_{\lambda\in \Lambda} I_{\lambda}$ is an ideal of $A$. By Definition \ref{ztd} we also have
$
\bigcap_{\lambda \in \Lambda}H(I_{\lambda})\supseteq H\left(\sum_{\lambda\in \Lambda} I_{\lambda}\right).
$
Conversely, if $P$ is in $\bigcap_{\lambda \in \Lambda}H(I_{\lambda}),$ then $I_{\lambda}\subseteq P$ for all $\lambda \in \Lambda$, and hence $\sum_{\lambda\in \Lambda} I_{\lambda}\subseteq P.$ This proves that $
\bigcap_{\lambda \in \Lambda}H(I_{\lambda})\subseteq H\left(\sum_{\lambda\in \Lambda} I_{\lambda}\right).
$
\end{proof}

\begin{lemma}\label{hihr}
If $I\in \mathrm{Idl}\,A$, then $H(I)=H(\mathrm{Rad}\,I).$
\end{lemma}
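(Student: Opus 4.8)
The plan is to prove the two inclusions $H(\mathrm{Rad}\,I)\subseteq H(I)$ and $H(I)\subseteq H(\mathrm{Rad}\,I)$ separately, using nothing beyond the definition of $H$ in Definition \ref{ztd} and the definition of $\mathrm{Rad}\,I$ in Definition \ref{radi}.

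First I would record the order-reversing behaviour of $H$: if $J_{1}\subseteq J_{2}$ are ideals of $A$, then every prime ideal containing $J_{2}$ also contains $J_{1}$, so $H(J_{2})\subseteq H(J_{1})$. I apply this with $J_{1}=I$ and $J_{2}=\mathrm{Rad}\,I$. This is legitimate because $\mathrm{Rad}\,I=\bigcap_{I\subseteq P\,\in\,\mathrm{Spec}\,A}P$ is an intersection of ideals each of which contains $I$, hence $\mathrm{Rad}\,I$ itself contains $I$ (and in the degenerate case, discussed below, $\mathrm{Rad}\,I=A\supseteq I$). Thus $I\subseteq\mathrm{Rad}\,I$ gives $H(\mathrm{Rad}\,I)\subseteq H(I)$.

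For the reverse inclusion I would take an arbitrary $P\in H(I)$, so that $P$ is prime and $I\subseteq P$. Then $P$ occurs as one of the terms of the intersection defining $\mathrm{Rad}\,I$, whence $\mathrm{Rad}\,I\subseteq P$; that is, $P\in H(\mathrm{Rad}\,I)$. Since $P$ was arbitrary, $H(I)\subseteq H(\mathrm{Rad}\,I)$, and combining the two inclusions yields the claimed equality.

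I do not expect any genuine obstacle here: the statement is a formal consequence of how the radical is defined as an intersection of prime ideals. The only point worth a moment's care is the degenerate case in which no prime ideal of $A$ contains $I$ (for instance $I=A$), where $\mathrm{Rad}\,I$ is the empty intersection $A$; the identity still holds because then $H(\mathrm{Rad}\,I)=H(A)=\emptyset=H(I)$ by Proposition \ref{ztp}(\ref{vza}).
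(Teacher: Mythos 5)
Your proof is correct and is simply a careful expansion of what the paper compresses into one line (``follows by repeated applications of Definition \ref{radi}''): both inclusions come straight from $I\subseteq\mathrm{Rad}\,I$ and the fact that any prime containing $I$ is one of the terms in the intersection defining $\mathrm{Rad}\,I$. Your handling of the degenerate case $H(I)=\emptyset$ is a sensible extra precaution but not a departure from the paper's approach.
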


\begin{proof}
The proof follows by repeated applications of Definition \ref{radi}. 
\end{proof}

\begin{theorem}
For a subset $S$ of $\mathrm{Spec}\,A$, let $K(S)=\bigcap_{I\in S} I.$ The operator $K$  has the following properties. 
\begin{enumerate}[\upshape (i)]
\item  $K(\emptyset)=R$ and  $K\left( \bigcup_{\lambda \in \Lambda}T_{\lambda} \right)=\bigcap_{\lambda \in \Lambda} K\left(T_{\lambda}\right). $
\item If $T$ is a subset of $X$ and  $I$ is an ideal of $R$ then $KH(I)= \mathrm{Rad}\,I,$ and $HK(S)$ is the closure of $S$ in $\mathrm{Spec}\,A$.  
\end{enumerate}  
\end{theorem}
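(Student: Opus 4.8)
The plan is to obtain both items almost directly from the definitions, the single point of real content being the identification of $HK(S)$ with the topological closure of $S$. Throughout I write $A$ for the skew brace (so $R=A$ and $X=\mathrm{Spec}\,A$ in the displayed statement), and for a subset $S\subseteq\mathrm{Spec}\,A$ I write its members as prime ideals $Q$, so $K(S)=\bigcap_{Q\in S}Q$.

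For item (i): the first equality is just the convention that the empty intersection of sub-skew-braces of $A$ equals $A$, so $K(\emptyset)=\bigcap_{Q\in\emptyset}Q=A$. For the second, I would unwind the definition of $K$: an element $x$ lies in $K\bigl(\bigcup_{\lambda\in\Lambda}T_{\lambda}\bigr)$ iff $x$ lies in every prime ideal belonging to $\bigcup_{\lambda}T_{\lambda}$, iff for each $\lambda$ the element $x$ lies in every member of $T_{\lambda}$, iff $x\in\bigcap_{Q\in T_{\lambda}}Q=K(T_{\lambda})$ for all $\lambda$; hence $K\bigl(\bigcup_{\lambda}T_{\lambda}\bigr)=\bigcap_{\lambda}K(T_{\lambda})$. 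There is nothing to overcome here.

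For item (ii): the equality $KH(I)=\mathrm{Rad}\,I$ is immediate, since $KH(I)=\bigcap_{Q\in H(I)}Q=\bigcap_{I\subseteq Q\,\in\,\mathrm{Spec}\,A}Q$, which is precisely $\mathrm{Rad}\,I$ by Definition \ref{radi}. For the assertion that $HK(S)$ is the closure $\overline{S}$ of $S$, I would proceed in three short steps. \emph{(a)} $S\subseteq HK(S)$: if $Q\in S$, then $K(S)=\bigcap_{Q'\in S}Q'\subseteq Q$, so $Q\in H(K(S))=HK(S)$. \emph{(b)} $HK(S)$ is closed: by Lemma \ref{inid}, the intersection $K(S)$ of the family $\{Q\}_{Q\in S}$ of ideals of $A$ is again an ideal of $A$, so $HK(S)=H(K(S))$ belongs to the collection of closed sets of Definition \ref{ztd}; together with \emph{(a)} this already gives $\overline{S}\subseteq HK(S)$. \emph{(c)} the reverse inclusion: any closed set $C$ with $S\subseteq C$ has the form $C=H(J)$ for some ideal $J$ by Definition \ref{ztd}, and $S\subseteq H(J)$ forces $J\subseteq Q$ for every $Q\in S$, hence $J\subseteq\bigcap_{Q\in S}Q=K(S)$; since $H$ is inclusion-reversing, $HK(S)=H(K(S))\subseteq H(J)=C$. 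Applying \emph{(c)} with $C=\overline{S}$ yields $HK(S)\subseteq\overline{S}$, and combining with \emph{(b)} gives $HK(S)=\overline{S}$.

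The computation is entirely routine. The only place where a previously established fact is needed rather than a formal manipulation is step \emph{(b)}: that $K(S)$ is an ideal, so that $HK(S)$ is an honest closed set in the spectral topology — and this is exactly Lemma \ref{inid}. Accordingly I do not anticipate any genuine obstacle; the mild bookkeeping to keep in view is the empty-intersection convention in item (i) and the inclusion-reversing behaviour of $H$ used in step \emph{(c)}.
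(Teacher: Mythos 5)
Your proposal is correct and follows essentially the same route as the paper's proof: both parts are obtained by unwinding the definitions of $K$ and $H$, with the closure statement reduced to showing that $HK(S)$ is a closed set containing $S$ and is contained in every closed set containing $S$. If anything, your version is slightly more complete, since you explicitly verify (via Lemma \ref{inid}) that $K(S)$ is an ideal, so that $HK(S)$ is genuinely one of the closed sets of Definition \ref{ztd} --- a point the paper's proof leaves implicit.
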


\begin{proof} 
(i). The first assertion follows from the empty intersection property. For the second, the fact $S_{\lambda} \subseteq \bigcup_{\lambda \in \Lambda}S_{\lambda}$ implies $K(S_{\lambda}) \supseteq K(\bigcup_{\lambda \in \Lambda}S_{\lambda}),$ and hence $\bigcap_{\lambda \in \Lambda}K(S_{\lambda}) \supseteq K(\bigcup_{\lambda \in \Lambda}S_{\lambda}).$ For the other half of the inclusion, let $S_{\lambda}=\{I_{\alpha, \lambda}\}_{\alpha \in L}$ and let $x \in \bigcap_{\lambda \in \Lambda}K(S_{\lambda})$. Then $x \in \bigcap_{\lambda \in \Lambda}(\bigcap_{\alpha \in L}I_{\alpha, \lambda});$ whence $x \in K(\bigcup_{\lambda \in \Lambda}S_{\lambda}).$

(ii). For the first assertion we observe that   $$a\in KH(I)\Leftrightarrow a\in P\;\text{for every}\; P \;\text{with}\;P\in H(I)\Leftrightarrow a\in P\;\text{for every}\; P\supseteq I.$$  
Therefore, $KH(I)=\bigcap_{P\supseteq I}=\mathrm{Rad}\,I.$ 
For the second claim, if a closed set $H(X)$ (for some subset $X$ of $A$) contains $S,$ then $X\subseteq I$ for all $I\in S;$ which subsequently implies $X\subseteq K(S)$ and hence $H(X) \supset HK(S)$. Since $T\subseteq HK(S),$ and $HK(S)$ is the smallest closed set of $X$ containing $S$, we have the desired claim.  
\end{proof}

Following \textsc{Kock} \cite{K07}, we can represent the relation between $H$ and $K$ categorically as follows:  
We observe that the poset map $K$ is a right adjoint of the map $H$. The unit of the adjunction $H \dashv K$ is $\eta \colon \left\langle S\right\rangle \mapsto KH(S)=  \left\langle S\right\rangle.$ Hence the full subcategory $\mathrm{Fix}\,\eta=\{S\in \mathfrak{P}(A)\mid \eta_S \;\text{is an isomorphism}   \}$ is the set of ideals of $R$. The counit of the adjunction $H \dashv K$ is $\epsilon \colon T \mapsto HK(T)=  \overline{T}.$ Therefore $\mathrm{Fix}\,\epsilon=\{T\in \mathfrak{P}(\mathrm{Spec}\,A)^{\text{op}}\mid \epsilon_T \;\text{is an isomorphism}   \}$ is the set of closed subsets.
Therefore the adjunction $H\dashv K$ restricts to an adjoint equivalence between categories $\mathrm{Idl}\,A$ and $\mathrm{Closed}(\mathrm{Spec} \,A).$ By considering open sets of the topology on $\mathrm{Spec}\,A$, the above isomorphism of categories become $\mathrm{Idl}\,A \approx \mathrm{Open}(\mathrm{Spec}\,A).$ The following diagram summarises the above inter-relations. 
\begin{equation*}\label{ids}
\vcenter{\xymatrix@C=2cm{
&\mathfrak{P}(A) \ar@<1.2ex>[r]^(.4){H} & \mathfrak{P}(\mathrm{Spec}\,A)^{\text{op}}\ar@<1ex>[l]^(.6){K}_(.6){\bot} \\ 
 \mathrm{Fix}\,\eta=\hspace*{-2.8cm}& \mathrm{Idl}\,A\ar@{^{(}->}[u]\ar@<1.1ex>[r] &  \mathrm{Closed}(\mathrm{Spec} \,A)^{\text{op}}\ar@<1ex>[l]_(.6){\approx}\ar@{^{(}->}[u] &\hspace*{-2.8cm}=\mathrm{Fix}\,\epsilon.  
}}  
\end{equation*}

\begin{lemma}\label{T0lem}
 Suppose $A$ is a skew brace and $P, Q\in \mathrm{Spec}\,A$. Then
\begin{enumerate}[\upshape (i)]
\item\label{clpv} 
$\mathrm{cl}{\{P\}}=H(P)$.
\item\label{pclq} \label{subprime} 
$P\in \mathrm{cl}{\{Q\}}$ if and only if $P\in H(Q)$.
\end{enumerate} 
\end{lemma}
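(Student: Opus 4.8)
The plan is to treat the two items together, since (ii) follows immediately from (i) once the latter is established; indeed, by Lemma \ref{T0lem}(\ref{clpv}) applied to $Q$, the statement $P\in\mathrm{cl}\{Q\}$ is literally the statement $P\in H(Q)$. So the real content is the identity $\mathrm{cl}\{P\}=H(P)$.

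First I would unwind the general description of closure obtained just above: for a subset $S$ of $\mathrm{Spec}\,A$ we have $\mathrm{cl}\,S = HK(S)$, where $K(S)=\bigcap_{I\in S}I$. Specializing to the singleton $S=\{P\}$ gives $K(\{P\})=P$, and hence $\mathrm{cl}\{P\}=HK(\{P\})=H(P)$. That is the whole argument in one line, but I would also want to present it self-containedly, without invoking the somewhat informally-stated closure theorem. For that, I would argue both inclusions directly. For $H(P)\subseteq\mathrm{cl}\{P\}$: $H(P)$ is itself a closed set (it is $H$ of an ideal), and it contains $P$ since $P\subseteq P$; hence it contains the smallest closed set containing $P$, which is $\mathrm{cl}\{P\}$ — wait, that gives the wrong direction, so I would instead note that $\mathrm{cl}\{P\}$ is the intersection of all closed sets containing $P$, i.e. of all $H(I)$ with $P\in H(I)$, i.e. of all $H(I)$ with $I\subseteq P$; since $P$ itself is such an ideal, $\mathrm{cl}\{P\}\subseteq H(P)$. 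For the reverse inclusion $H(P)\subseteq\mathrm{cl}\{P\}$: if $Q\in H(P)$ then $P\subseteq Q$, so $Q$ lies in every $H(I)$ with $I\subseteq P$ (because $I\subseteq P\subseteq Q$), hence $Q$ lies in the intersection of all such closed sets, which is $\mathrm{cl}\{P\}$.

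I do not expect any genuine obstacle here; the statement is essentially a formal consequence of the Galois connection between ideals and closed sets. The only thing to be careful about is the bookkeeping of which closed sets contain a given point — i.e. the elementary fact that the closed sets through $P$ are exactly the $H(I)$ with $I\subseteq P$ — and the direction of the inclusions, since it is easy to write the chain backwards. I would present the two-inclusion argument rather than the one-line appeal to $HK$, as it keeps the proof elementary and independent of the informal discussion preceding it.
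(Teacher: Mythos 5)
Your proposal is correct and follows essentially the same route as the paper: both inclusions of $\mathrm{cl}\{P\}=H(P)$ are obtained from the observation that the closed sets containing $P$ are exactly the $H(I)$ with $I\subseteq P$, and item (ii) is then read off from item (i) applied to $Q$. The one-line alternative via $\mathrm{cl}\{P\}=HK(\{P\})=H(P)$ is also valid, but your elementary two-inclusion argument (once the direction slip is cleaned up) matches the paper's own proof.
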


\begin{proof}
(i). Since $H(P)$ is a closed set containing $P$ and $\mathrm{cl}{\{P\}}$ is the smallest closed set containing $P$, we immediately have $\mathrm{cl}{\{P\}}\subseteq H(P).$
For the converse, suppose  $\mathrm{cl}{\{P\}}=H(I)$ for some ideal $I$ of $A$. Clearly, $P\in \mathrm{cl}{\{P\}}$, hence $I\subseteq P$. Thus any prime ideal that contains $P$ will also contain $I.$ Hence, $H(P)\subseteq H(I)=\mathrm{cl}{\{P\}}$.
 
(ii). Observe that by (\ref{clpv}), $P\in \mathrm{cl}{\{Q\}}$ if and only if $P \in H(Q)$, and this occurs if and only if $Q \subseteq P$.
\end{proof}

\begin{proposition}\label{t0a}
Every $\mathrm{Spec}\,A$ is a $T_{0}$-space.
\end{proposition}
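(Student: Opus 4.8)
The plan is to use the standard characterization of the $T_0$ separation axiom in terms of topological closures: a space is $T_0$ if and only if distinct points have distinct closures. So I would take two prime ideals $P, Q \in \mathrm{Spec}\,A$ with $\mathrm{cl}\{P\} = \mathrm{cl}\{Q\}$ and aim to conclude $P = Q$.

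First I would invoke Lemma \ref{T0lem}(\ref{clpv}), which identifies $\mathrm{cl}\{P\} = H(P)$ and $\mathrm{cl}\{Q\} = H(Q)$. Thus the hypothesis becomes $H(P) = H(Q)$. Next I would observe that $P \in H(P)$ trivially (since $P \subseteq P$), so $P \in H(Q)$, which by definition of $H$ means $Q \subseteq P$. By the symmetric argument, $Q \in H(Q) = H(P)$ gives $P \subseteq Q$. Combining the two inclusions yields $P = Q$, as desired.

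Alternatively, one can phrase this directly via Lemma \ref{T0lem}(\ref{subprime}): if $P$ and $Q$ cannot be separated by open sets, then each lies in the closure of the other, so $P \in H(Q)$ and $Q \in H(P)$, i.e.\ $Q \subseteq P$ and $P \subseteq Q$. Either formulation is essentially immediate once Lemma \ref{T0lem} is in hand.

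There is no real obstacle here; the content has already been packaged into Lemma \ref{T0lem}, and the proof of $T_0$-ness is just the observation that the specialization preorder on $\mathrm{Spec}\,A$ (which is $H$-containment, hence reverse set-inclusion of ideals) is antisymmetric because set-inclusion is a partial order. The only thing to be careful about is citing the correct part of Lemma \ref{T0lem} and making explicit that $P \in H(P)$ and $Q \in H(Q)$ always hold.
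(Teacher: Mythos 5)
Your proof is correct and follows essentially the same route as the paper: both arguments reduce to Lemma \ref{T0lem}(\ref{clpv}) together with the antisymmetry of set inclusion, yours phrased via the ``distinct points have distinct closures'' characterization and the paper's phrased by directly exhibiting, for $P\nsubseteq Q$, the closed set $H(P)$ containing $P$ but not $Q$. These are contrapositives of one another, so no substantive difference.
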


\begin{proof}
Let $P$ and $Q$ be two distinct points of $\mathrm{Spec}\,A$ such that $P\nsubseteq Q.$ By Lemma \ref{T0lem} we have
$Q\notin \mathrm{cl}{\{P\}}=H(P)\ni P,$ 
which implies there exists a closed set $H(P)$ containing $P$ that does not contain $Q$.
\end{proof}

With a further restriction on a skew brace, we obtain the following

\begin{proposition}
If $A$ is a skew brace with $A^{\scriptscriptstyle 2}\nsubseteq M$ for all $M\in \mathrm{Max}\,A,$ then $\mathrm{Spec}\,A$ is a $T_{1}$-space if and only if\, $\mathrm{Spec}\,A=\mathrm{Max}\,A$.
\end{proposition}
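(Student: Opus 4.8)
The plan is to convert the $T_{1}$ condition into a purely order-theoretic statement about prime ideals and then match it with maximality. First I would record the easy half of the conclusion: by hypothesis $A^{\scriptscriptstyle 2}\nsubseteq M$ for every $M\in\mathrm{Max}\,A$, so Proposition \ref{minp} gives $\mathrm{Max}\,A\subseteq\mathrm{Spec}\,A$. Next, a space is $T_{1}$ precisely when every singleton is closed, so by Lemma \ref{T0lem}(\ref{clpv}) the space $\mathrm{Spec}\,A$ is $T_{1}$ if and only if $H(P)=\{P\}$ for every $P\in\mathrm{Spec}\,A$. Since $H(P)=\{Q\in\mathrm{Spec}\,A\mid P\subseteq Q\}$, this is exactly the statement that no prime ideal of $A$ is properly contained in another prime ideal, i.e. that every prime ideal of $A$ is maximal among prime ideals.

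For the direction $(\Leftarrow)$, assume $\mathrm{Spec}\,A=\mathrm{Max}\,A$. If $P\in\mathrm{Spec}\,A$ and $Q\in H(P)$, then $Q$ is a prime, hence proper, ideal containing the maximal ideal $P$, so $Q=P$; thus $H(P)=\{P\}$ and $\mathrm{Spec}\,A$ is $T_{1}$ by the criterion above. For $(\Rightarrow)$, assume $\mathrm{Spec}\,A$ is $T_{1}$ and fix $P\in\mathrm{Spec}\,A$; I want $P\in\mathrm{Max}\,A$. Using Zorn's lemma on the family of proper ideals containing $P$ (which is nonempty and, by Lemma \ref{inid} and Corollary \ref{insum}, is closed under unions of chains, these unions remaining proper), one obtains a maximal ideal $M$ with $P\subseteq M$. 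By the hypothesis and Proposition \ref{minp}, $M$ is prime, so $M\in\mathrm{Spec}\,A$; the $T_{1}$ criterion then forces $P=M$, so $P\in\mathrm{Max}\,A$. Combining this with $\mathrm{Max}\,A\subseteq\mathrm{Spec}\,A$ yields $\mathrm{Spec}\,A=\mathrm{Max}\,A$.

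The step that deserves the most care — and the expected main obstacle — is the Zorn's lemma argument: one must make sure that the union of a chain of proper ideals of $A$ is again a proper ideal, so that a prime ideal genuinely sits inside some maximal ideal. Once that is in place, the rest is a routine combination of Lemma \ref{T0lem}, Proposition \ref{minp}, and the definition of a $T_{1}$-space.
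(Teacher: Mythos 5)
Your proposal is essentially the paper's proof: the $(\Leftarrow)$ direction is identical (if every prime is maximal then $H(P)=\{P\}$, so singletons are closed), and your $(\Rightarrow)$ direction is the paper's argument run directly instead of contrapositively (the paper takes a non-maximal prime $P$, produces $M\in\mathrm{Max}\,A$ with $P\subsetneq M$, and concludes $M\in H(P)=\mathrm{cl}\{P\}$, so $\{P\}$ is not closed). Both versions pivot on Proposition \ref{minp} and Lemma \ref{T0lem}(\ref{clpv}) in the same way.

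The one place you go beyond the paper is in trying to justify that a prime ideal sits inside a maximal ideal -- a step the paper simply asserts. You are right to flag this as the delicate point, but the justification you sketch does not close it: Lemma \ref{inid} and Corollary \ref{insum} concern intersections and sums of ideals and say nothing about the union of a chain of \emph{proper} ideals remaining proper. Unlike a unital ring, a skew brace ideal is not required to omit any distinguished element (indeed $e$ lies in every ideal), so there is no witness forcing the union of a chain of proper ideals to be proper; the paper's own concluding remarks point to exactly this absence of a unit as an obstacle elsewhere. So the Zorn step remains unproved in your write-up -- but this is a gap you share with, and at least made visible in, the paper's own proof, rather than a defect peculiar to your argument.
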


\begin{proof}
By Theorem \ref{minp} it follows that $\mathrm{Spec}\,A\supseteq \mathrm{Max}\,A$. Let $\mathrm{Spec}\,A\neq \mathrm{Max}\,A$. Then there exists $P\in \mathrm{Spec}\,A$ such that $P\notin \mathrm{Max}\,A$, and that implies there exists $M\in \mathrm{Max}\,A$ such that $P \subsetneq M$. Then $M\in H(P) = \mathrm{cl}{\{P\}}$, where the equality follows from Lemma \ref{T0lem}(\ref{clpv}). Hence $\{P\}$ is not closed and hence $\mathrm{Spec}\,A$ is not a $T_{1}$-space.
Conversely, suppose that $\mathrm{Spec}\,A = \mathrm{Max}\,A$. Then $H(P) = \{P\}$ for all $P\in \mathrm{Spec}\,A$ and hence every singleton set is closed, and that implies $\mathrm{Spec}\,A$ is a $T_{1}$-space.
\end{proof} 

Let us recall the notion of irreducible topological spaces and some of their properties. Readers may consult \textsc{G\"{o}rtz \& Wedhorn} \cite{GW20} for further details on irreducibility. 

\begin{definition}
\emph{If $X$ is a topological space, a closed subset $S$ is \emph{irreducible} if $S$ is not the union of two properly smaller closed subsets $S_{1}, S_{ 2}\subsetneq S.$  A maximal irreducible subset of a topological space $X$ is called an
\emph{irreducible component} of $X.$ A point $x$ in a closed subset $S$ is called a \emph{generic point} of $S$ if $S = \mathrm{cl}(\{x\}),$ the closure of $\{x\}$.}
\end{definition}

Recall the following basic properties on irreducibility.

\begin{proposition}\label{irrpp}
Let $X$ be a topological space.
\begin{enumerate}[\upshape (i)]
\item\label{ircl}  A subspace $Y$ of $X$ is irreducible if and only
if $\mathrm{cl}(Y)$ is irreducible. 
\item\label{mircv} The irreducible components of $X$ are closed and cover $X$.
\end{enumerate}
\end{proposition}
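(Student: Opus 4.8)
The plan is to reduce everything to the standard open-set reformulation of irreducibility: a nonempty subspace $Z$ of $X$ is irreducible if and only if any two nonempty relatively open subsets of $Z$ meet (equivalently, every nonempty open subset of $Z$ is dense in $Z$). The only elementary fact I would isolate first is that a set $U$ open in $X$ meets $Y$ precisely when it meets $\mathrm{cl}(Y)$; this is exactly the bridge that allows one to transfer the open-set criterion between $Y$ and $\mathrm{cl}(Y)$.

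For part (\ref{ircl}) I would prove both implications with this criterion. Assume $Y$ is irreducible and take nonempty relatively open subsets $V_1=U_1\cap\mathrm{cl}(Y)$ and $V_2=U_2\cap\mathrm{cl}(Y)$ of $\mathrm{cl}(Y)$, with $U_1,U_2$ open in $X$. Since each $U_i$ meets $\mathrm{cl}(Y)$, it meets $Y$, so each $U_i\cap Y$ is a nonempty open subset of $Y$; irreducibility of $Y$ gives $U_1\cap U_2\cap Y\neq\emptyset$, and a fortiori $V_1\cap V_2\neq\emptyset$. Conversely, assume $\mathrm{cl}(Y)$ is irreducible and let $U_1\cap Y$, $U_2\cap Y$ be nonempty open subsets of $Y$. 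Then $U_1$ and $U_2$ meet $\mathrm{cl}(Y)$, so $U_1\cap U_2\cap\mathrm{cl}(Y)\neq\emptyset$ by irreducibility of $\mathrm{cl}(Y)$; since $U_1\cap U_2$ is open and meets $\mathrm{cl}(Y)$, it meets $Y$, so $U_1\cap U_2\cap Y\neq\emptyset$, and $Y$ is irreducible.

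For part (\ref{mircv}), the fact that an irreducible component $C$ is closed is immediate from part (\ref{ircl}): $\mathrm{cl}(C)$ is irreducible and contains $C$, so maximality of $C$ forces $C=\mathrm{cl}(C)$. To see that the components cover $X$, fix $x\in X$; the singleton $\{x\}$ is irreducible, so the family of irreducible subsets of $X$ containing $x$, ordered by inclusion, is nonempty, and I would apply Zorn's lemma to it. The only step needing an argument is that the union $Y=\bigcup_\alpha Y_\alpha$ of a chain of irreducible subsets is irreducible: if $U_1,U_2$ are open in $X$ with $U_i\cap Y\neq\emptyset$, each $U_i$ already meets some $Y_{\alpha_i}$, and by directedness of the chain both meet the larger of $Y_{\alpha_1},Y_{\alpha_2}$, whose irreducibility then yields $U_1\cap U_2\cap Y\neq\emptyset$. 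A maximal element of the family is an irreducible component (any strictly larger irreducible set would still contain $x$, contradicting maximality) and contains $x$, so the components cover $X$.

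There is no substantial obstacle here, since this is a standard fact; the two places that repay a little care are the equivalence "$U$ meets $Y$" $\iff$ "$U$ meets $\mathrm{cl}(Y)$" used throughout part (\ref{ircl}), and the verification that a chain of irreducible subsets has irreducible union in part (\ref{mircv}). Everything else is routine bookkeeping with the open-set criterion for irreducibility.
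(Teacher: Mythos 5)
Your proof is correct and complete: the open-set reformulation of irreducibility, the bridge ``$U$ meets $Y$ iff $U$ meets $\mathrm{cl}(Y)$'', and the Zorn's lemma argument on chains of irreducible sets containing a fixed point are all handled properly. Note, however, that the paper offers no proof of this proposition at all --- it is explicitly ``recalled'' as a standard fact with a citation to G\"{o}rtz \& Wedhorn --- so there is nothing to compare against; your argument is simply the standard one from that reference.
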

 
\begin{lemma}\label{lemprime}
$\{H(P)\}_{P\in \mathrm{Spec}\,A}$ are precisely the irreducible closed subsets of $\mathrm{Spec}\,A$.  
\end{lemma}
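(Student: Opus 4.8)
The plan is to prove the two inclusions separately: first that every $H(P)$ with $P\in\mathrm{Spec}\,A$ is irreducible and closed, and then that every irreducible closed subset of $\mathrm{Spec}\,A$ has this form. The "closed" part is immediate from Definition \ref{ztd}. For irreducibility of $H(P)$, I would use the characterization via generic points together with the fact that a nonempty subset is irreducible if and only if its closure is irreducible (Proposition \ref{irrpp}(\ref{ircl})). Concretely, by Lemma \ref{T0lem}(\ref{clpv}) we have $H(P)=\mathrm{cl}\{P\}$, so $H(P)$ is the closure of the (obviously irreducible) one-point set $\{P\}$; hence $H(P)$ is irreducible.

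For the converse, let $S$ be a nonempty irreducible closed subset of $\mathrm{Spec}\,A$. Since $S$ is closed, $S=H(I)$ for some ideal $I$, and by Lemma \ref{hihr} we may take $I=\mathrm{Rad}\,I=K(S)=\bigcap_{P\in S}P$ (using the $KH$/$HK$ Galois correspondence already established). The key claim is that $P_0:=K(S)$ is itself a prime ideal; granted that, we get $S=H(K(S))=H(P_0)$ with $P_0\in\mathrm{Spec}\,A$, as desired. To show $P_0$ is prime, suppose $I*J\subseteq P_0$ for subsets $I,J$ of $A$ (or, by Proposition \ref{npd}, for ideals — one should be a little careful which version of primality is being verified, but the ideal version suffices since $H$ only sees $\langle S\rangle$). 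Then $H(P_0)\subseteq H(I*J)=H(I)\cup H(J)$ by Proposition \ref{ztp}(\ref{uis}). Intersecting with $S=H(P_0)$ gives $S=(S\cap H(I))\cup(S\cap H(J))$, a union of two closed subsets of $S$. Irreducibility of $S$ forces, say, $S\subseteq H(I)$, i.e.\ $I\subseteq P$ for every $P\in S$, hence $I\subseteq K(S)=P_0$; similarly in the other case $J\subseteq P_0$. This is exactly the defining condition for $P_0$ to be prime.

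The main obstacle I anticipate is the interplay between the "subsets" formulation of primality in Definition \ref{pri} and the "ideals" formulation: the Galois-correspondence argument naturally produces the implication "$I*J\subseteq P_0\Rightarrow I\subseteq P_0$ or $J\subseteq P_0$" only for $I,J$ ranging over ideals, because closed sets $H(I)$ depend only on $\langle I\rangle$. One must therefore either (a) invoke Proposition \ref{npd} to note that for the ideals defined via $\langle S\rangle$ the two notions interact correctly, or (b) check directly that if the ideal-version of primality holds for $P_0$ and $I*J\subseteq P_0$ for mere subsets $I,J$, then passing to generated ideals $\langle I\rangle*\langle J\rangle$ still lands in $P_0$ — this needs that $\langle I\rangle*\langle J\rangle\subseteq\langle I*J\rangle$ (or at least $\subseteq P_0$), which should follow from the bilinearity-type identities for $*$ used in the proof of Lemma \ref{inid}. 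I would spell out this reduction as a short preliminary remark so that the core irreducibility argument runs cleanly, and I expect the rest to be routine given Proposition \ref{ztp}, Lemma \ref{T0lem}, and Lemma \ref{hihr}.
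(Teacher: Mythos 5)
Your argument is correct, and in the converse direction it is organized differently from the paper's: the paper argues by contradiction, taking an irreducible closed set $H(I)$ with $I\notin\mathrm{Spec}\,A$, producing $I_1,I_2\nsubseteq I$ with $I_1*I_2\subseteq I$, and splitting $H(I)=H(\langle I,I_1\rangle)\cup H(\langle I,I_2\rangle)$ into two proper closed pieces; you instead identify the generic point directly as $P_0=K(S)=\bigcap_{P\in S}P$ and deduce its primality from irreducibility. Your normalization $S=H(K(S))$ is a genuine improvement: the paper's step ``$H(\langle I,I_1\rangle)\neq H(I)$'' really requires $I_1\nsubseteq\mathrm{Rad}\,I$ rather than merely $I_1\nsubseteq I$, and working with the radical ideal $K(S)$ (for which $I\subseteq K(S)$ iff $S\subseteq H(I)$) supplies exactly that.

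On the subset-versus-ideal worry you flag: it dissolves more simply than either of your proposed fixes. To get $S\subseteq H(I)\cup H(J)$ from $I*J\subseteq P_0$ you need no property of $P_0$ at all, only that each individual $P\in S$ is prime in the sense of Definition \ref{pri} --- which holds by hypothesis, since $S\subseteq\mathrm{Spec}\,A$. Concretely, for arbitrary subsets $I,J$ with $I*J\subseteq P_0$, every $P\in S$ contains $I*J$ and hence contains $I$ or $J$; so $S=(S\cap H(I))\cup(S\cap H(J))$, irreducibility forces (say) $S\subseteq H(I)$, and then $I\subseteq K(S)=P_0$. (If you want the full identity $H(I*J)=H(I)\cup H(J)$ for subsets, the reverse inclusion follows from the computation in the proof of Lemma \ref{dint}, which only uses that $P$ is an ideal containing $I$ or containing $J$.) Note that your fix (a) does not work as stated: Proposition \ref{npd} passes from subset-primality to ideal-primality, which is the wrong direction here; and fix (b), establishing $\langle I\rangle*\langle J\rangle\subseteq\langle I*J\rangle$, is not needed once the argument is run on subsets as above.
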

  
\begin{proof}
Let $P\in \mathrm{Spec}\,A$. Since $\{P\}$ is irreducible, by Proposition \ref{irrpp}(\ref{ircl}) and Lemma \ref{T0lem}(\ref{clpv}), so is  $H(P).$  Let $H(I)$ be an irreducible closed subset of of $\mathrm{Spec}\,A$ and if possible, $I\notin \mathrm{Spec}\,A.$ This implies there exist $I_1$ and $I_2$ such that  $I_{\scriptscriptstyle 1}\nsubseteq I$ and $I_{\scriptscriptstyle 2}\nsubseteq I$, but $I_{\scriptscriptstyle 1}*I_{\scriptscriptstyle 2}\subseteq I$. Then we have:  
$$H(\langle I, I_{\scriptscriptstyle 1}\rangle)\cup H(\langle I,I_{\scriptscriptstyle 2}\rangle)=H(\langle I,I_{\scriptscriptstyle 1}\rangle \cap \langle I,I_{\scriptscriptstyle 2}\rangle)=H(\langle I, I_{\scriptscriptstyle 1}*I_{\scriptscriptstyle 2}\rangle)=H(I),$$
where the first two equalities follow from Proposition \ref{ztp}(\ref{uis}).
Since $H(\langle I,I_{\scriptscriptstyle 1}\rangle)\neq H(I)$ and $H(\langle I,I_{\scriptscriptstyle 2}\rangle)\neq H(I),$ the closed set $H(I)$ is not irreducible, a contradiction.
\end{proof}

\begin{theorem}
Every irreducible closed subset of $\mathrm{Spec}\,A$ has a unique generic point.
\end{theorem}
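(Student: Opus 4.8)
The plan is to combine Lemma~\ref{lemprime} with the $T_0$-property (Proposition~\ref{t0a}). First I would take an irreducible closed subset $S$ of $\mathrm{Spec}\,A$. By Lemma~\ref{lemprime}, $S = H(P)$ for some prime ideal $P \in \mathrm{Spec}\,A$. Then by Lemma~\ref{T0lem}(\ref{clpv}), $H(P) = \mathrm{cl}\{P\}$, so $P$ is a generic point of $S$. This establishes existence.

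For uniqueness, suppose $P$ and $Q$ are both generic points of $S$, i.e. $S = \mathrm{cl}\{P\} = \mathrm{cl}\{Q\}$. Since $P \in S = \mathrm{cl}\{Q\}$, Lemma~\ref{T0lem}(\ref{pclq}) gives $Q \subseteq P$; symmetrically, $Q \in S = \mathrm{cl}\{P\}$ gives $P \subseteq Q$. Hence $P = Q$. (Alternatively, one can simply invoke that a $T_0$-space has at most one generic point per irreducible closed set, since two generic points lie in each other's closures and $T_0$ forces them to coincide.)

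I do not expect any genuine obstacle here: all the substantive work was done in Lemma~\ref{lemprime} (the identification of irreducible closed sets with the sets $H(P)$) and in Lemma~\ref{T0lem} (the description of closures). The only point requiring a little care is making sure the two generic points are compared in the right direction, which is handled cleanly by the ``if and only if'' in Lemma~\ref{T0lem}(\ref{subprime}) applied twice and the antisymmetry of $\subseteq$. Writing it out, the proof is essentially three lines for existence and three lines for uniqueness.
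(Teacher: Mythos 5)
Your proof is correct and follows the same route as the paper: existence via Lemma~\ref{lemprime} combined with Lemma~\ref{T0lem}(\ref{clpv}), and uniqueness via the $T_0$-property of Proposition~\ref{t0a} (your explicit double application of Lemma~\ref{T0lem}(\ref{subprime}) is just that argument spelled out). No issues.
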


\begin{proof}
The existence of generic point follows from Lemma \ref{T0lem} and Lemma \ref{lemprime}, whereas uniqueness part follows from Proposition \ref{t0a}. 
\end{proof} 

Note that if we have a decreasing chain of prime ideals, then the intersection of the ideals in the chain gives the minimal element, which is either a prime ideal or the zero ideal, and in the second case, with a vacuous argument, it is also a prime ideal. This confirms the existence of the minimal prime ideals of $A$ and hence allows us to obtain the following

\begin{lemma}\label{thmirre}
The irreducible components of $\mathrm{Spec}\,A$ are the closed sets $H(P)$, where $P$ is a minimal prime ideal of $A$.
\end{lemma}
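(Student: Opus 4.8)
The statement combines two facts: every $H(P)$ with $P$ minimal prime is an irreducible component, and conversely every irreducible component arises this way. By Lemma~\ref{lemprime}, the irreducible closed subsets of $\mathrm{Spec}\,A$ are exactly the sets $H(Q)$ for $Q\in\mathrm{Spec}\,A$, so an irreducible component is a \emph{maximal} element of the family $\{H(Q)\}_{Q\in\mathrm{Spec}\,A}$ under inclusion. The key observation is the order-reversing correspondence: by Lemma~\ref{T0lem}(\ref{subprime}), for $Q_1,Q_2\in\mathrm{Spec}\,A$ we have $H(Q_1)\subseteq H(Q_2)$ if and only if $Q_2\subseteq Q_1$. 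Hence $H(Q)$ is maximal among irreducible closed subsets precisely when $Q$ is minimal among prime ideals.

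First I would prove one implication directly. Let $P$ be a minimal prime ideal; then $H(P)$ is irreducible closed by Lemma~\ref{lemprime}. If $H(P)\subseteq H(Q)$ for some irreducible closed set, then $H(Q)=H(Q')$ for some prime $Q'$ (Lemma~\ref{lemprime} again), and $H(P)\subseteq H(Q')$ forces $Q'\subseteq P$ by the correspondence above; minimality of $P$ gives $Q'=P$, so $H(Q)=H(P)$. Thus $H(P)$ is maximal, i.e.\ an irreducible component.

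Conversely, let $C$ be an irreducible component. By Lemma~\ref{lemprime}, $C=H(Q)$ for some prime ideal $Q$. I claim $Q$ is a minimal prime. The existence of minimal primes below $Q$ is exactly the content of the remark preceding the statement (a descending chain of primes has its intersection as a lower bound, and that intersection is again prime, so Zorn's lemma applies to the poset of primes contained in $Q$ ordered by reverse inclusion). Pick a minimal prime $P\subseteq Q$. Then $H(Q)\subseteq H(P)$, and since $H(P)$ is irreducible closed and $C=H(Q)$ is a \emph{maximal} irreducible closed set, we get $H(Q)=H(P)$, whence $Q\subseteq P$ by the correspondence, so $Q=P$ is minimal. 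Combined with Proposition~\ref{irrpp}(\ref{mircv}), which guarantees the components are closed and cover the space, this identifies the irreducible components exactly as the $H(P)$ for $P$ minimal prime.

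The only real subtlety is justifying the existence of minimal primes, but the paragraph immediately before the lemma already supplies this (the intersection of a chain of primes is prime, possibly vacuously, so Zorn applies); everything else is a formal consequence of the inclusion-reversing bijection between primes and irreducible closed sets established in Lemmas~\ref{T0lem} and~\ref{lemprime}. I expect no genuine obstacle here — the argument is essentially a transport of the classical commutative-ring statement through the order-duality $P\mapsto H(P)$.
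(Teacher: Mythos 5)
Your proof is correct and follows essentially the same route as the paper: both rest on Lemma~\ref{lemprime} together with the inclusion-reversing correspondence $H(Q_1)\subseteq H(Q_2)\Leftrightarrow Q_2\subseteq Q_1$ from Lemma~\ref{T0lem}. In fact you are slightly more complete than the paper, which only argues explicitly that $H(P)$ is maximal when $P$ is minimal (by the same contradiction you give), leaving the converse direction — that every component is of this form — implicit, whereas you spell it out using the existence of minimal primes from the preceding remark.
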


\begin{proof}
By Proposition \ref{irrpp}(\ref{mircv}), irreducible components of $\mathrm{Spec}\,A$ are closed. If $P$ is a minimal prime ideal, then by Lemma \ref{lemprime}, $H(P)$ is irreducible. If $H(P)$ is not a maximal irreducible subset of $\mathrm{Spec}\,A$, then there exists a maximal irreducible subset $H(Q)$ with $Q\in  \mathrm{Spec}\,A$ such that $H(P)\subsetneq H(Q)$. This implies that $P\in H(Q)$ and hence $Q\subsetneq P$, contradicting the minimality property of $P$.
\end{proof}

\begin{proposition}
$\mathrm{Spec}\,A$ is irreducible if and only if the $\mathrm{Nil}\,A$ is prime.  
\end{proposition}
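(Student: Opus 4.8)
The plan is to prove the two implications separately, using the machinery already developed. Recall that a topological space $X$ is irreducible if and only if it is not the union of two proper closed subsets, equivalently (when $X\neq\emptyset$) every nonempty open subset is dense, equivalently any two nonempty open subsets intersect. Here $\mathrm{Spec}\,A$ is nonempty precisely when $A$ has at least one prime ideal; note $\mathrm{Nil}\,A$ being prime in particular forces $\mathrm{Spec}\,A\neq\emptyset$, so in the forward direction we may assume there is something to work with.

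For the implication ``$\mathrm{Nil}\,A$ prime $\implies \mathrm{Spec}\,A$ irreducible'': since $\mathrm{Nil}\,A=\bigcap_{P\in\mathrm{Spec}\,A}P\subseteq P$ for every prime $P$, we have $\mathrm{Spec}\,A=H(\mathrm{Nil}\,A)=H(P_0)$ where $P_0:=\mathrm{Nil}\,A$ is itself prime. By Lemma~\ref{lemprime}, the sets $H(P)$ with $P\in\mathrm{Spec}\,A$ are precisely the irreducible closed subsets; hence $\mathrm{Spec}\,A=H(P_0)$ is irreducible. (Equivalently, by Lemma~\ref{T0lem}(\ref{clpv}), $\mathrm{Spec}\,A=H(\mathrm{Nil}\,A)=\mathrm{cl}\{\mathrm{Nil}\,A\}$, so the whole space has a generic point and is therefore irreducible.)

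For the converse, suppose $\mathrm{Spec}\,A$ is irreducible. First, irreducibility forces $\mathrm{Spec}\,A\neq\emptyset$ (the empty space is, by the usual convention, not irreducible — or at any rate $H(A)=\emptyset$ is excluded as a closed set representing a prime), so $\mathrm{Nil}\,A$ is a genuine intersection over a nonempty family and in particular $\mathrm{Nil}\,A\neq A$, i.e.\ $\mathrm{Nil}\,A$ is a proper ideal. It remains to check the prime condition of Definition~\ref{pri}: let $I,J$ be subsets of $A$ with $I*J\subseteq \mathrm{Nil}\,A$. By Lemma~\ref{hihr} and Proposition~\ref{ztp}(\ref{uis}) applied to the generated ideals, $H(\langle I\rangle)\cup H(\langle J\rangle)=H(\langle I\rangle\cap\langle J\rangle)\supseteq H(\langle I\rangle *\langle J\rangle)=H(\langle I*J\rangle)\supseteq H(\mathrm{Nil}\,A)=\mathrm{Spec}\,A$, so $\mathrm{Spec}\,A=H(\langle I\rangle)\cup H(\langle J\rangle)$. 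Since $\mathrm{Spec}\,A$ is irreducible, one of these closed sets is all of $\mathrm{Spec}\,A$, say $H(\langle I\rangle)=\mathrm{Spec}\,A$; then every prime ideal contains $\langle I\rangle\supseteq I$, whence $I\subseteq\bigcap_{P\in\mathrm{Spec}\,A}P=\mathrm{Nil}\,A$. Thus $\mathrm{Nil}\,A$ is prime.

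The one point needing care is the handling of the set-vs-ideal distinction in Definition~\ref{pri}: the definition quantifies over arbitrary subsets $I,J$, but the closed sets $H(-)$ and the decomposition results (Proposition~\ref{ztp}(\ref{uis}), Proposition~\ref{npd}) are naturally phrased for ideals. The device is simply that $H(S)=H(\langle S\rangle)$ for any subset $S$ (noted at the start of Section~4), together with the elementary inclusion $I*J\subseteq\langle I\rangle*\langle J\rangle\subseteq\langle I*J\rangle$; I expect this bookkeeping, rather than any substantive topological difficulty, to be the only mild obstacle, and the empty-space edge case is worth an explicit sentence. No step here requires a calculation beyond invoking the already-established lattice and closure identities.
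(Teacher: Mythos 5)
Your argument is correct, and the forward direction matches the paper's (the paper phrases it via irreducible components covering the space; you go straight to Lemma~\ref{lemprime}, which is if anything cleaner). The converse, however, takes a genuinely different route. The paper argues top-down: irreducibility forces $\mathrm{Spec}\,A=H(P)$ for a single prime $P$ (via the classification of irreducible closed sets), whence $P\subseteq\mathrm{Nil}\,A\subseteq P$ and $\mathrm{Nil}\,A=P$ is prime. You instead verify Definition~\ref{pri} by hand: given $I*J\subseteq\mathrm{Nil}\,A$, you decompose $\mathrm{Spec}\,A$ as $H(\langle I\rangle)\cup H(\langle J\rangle)$ and let irreducibility pick out one piece. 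Your route is longer but more robust — it does not lean on Lemma~\ref{lemprime} at all, and it makes visible exactly where the ``subsets rather than ideals'' quantification in Definition~\ref{pri} is used. One link in your displayed chain is not actually justified: the equality $H(\langle I\rangle *\langle J\rangle)=H(\langle I*J\rangle)$ requires, in the direction you use, the inclusion $\langle I\rangle *\langle J\rangle\subseteq\langle I*J\rangle$, which is not established anywhere in the paper and is not obvious for the non-associative, non-distributive operation $*$ (your closing remark asserts this inclusion as ``elementary,'' but it is the one genuinely doubtful claim in the write-up). Fortunately you do not need it: for each $P\in\mathrm{Spec}\,A$ one has $I*J\subseteq\mathrm{Nil}\,A\subseteq P$, so Definition~\ref{pri} applied directly to the subsets $I,J$ gives $I\subseteq P$ or $J\subseteq P$, hence $P\in H(\langle I\rangle)\cup H(\langle J\rangle)$; this yields $\mathrm{Spec}\,A=H(\langle I\rangle)\cup H(\langle J\rangle)$ without any lattice identity, and the rest of your argument goes through unchanged.
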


\begin{proof}
If $\mathrm{Nil}\,A$ is prime, then it is the minimum prime ideal. Therefore, $\mathrm{Spec}\,A$ has only one maximal irreducible component, namely $H(\mathrm{Nil}\,A),$  and since by Proposition \ref{irrpp}(\ref{mircv}) the maximal irreducible components cover $\mathrm{Spec}\,A,$ the irreducible component $H(\mathrm{Nil}\,A)$ must be equal to $\mathrm{Spec}\,A$. Hence $\mathrm{Spec}\,A$ is irreducible by Lemma \ref{thmirre}.
Conversely, suppose $\mathrm{Spec}\,A$ is irreducible. Then by Proposition \ref{irrpp}(\ref{mircv}), there exists  $P\in\mathrm{Spec}\,A$ such that  $H(P)=\mathrm{Spec}\,A.$ Hence $P\subseteq Q$ for all $Q\in \mathrm{Spec}\,A,$ and so is $\mathrm{Nil}\,A,$ \textit{i.e.},  $P\subseteq \mathrm{Nil}\,A$. By Definition \ref{radi}, $\mathrm{Nil}\,A\subseteq P$, and hence $\mathrm{Nil}\,A$ is prime. 
\end{proof} 

Before we discuss continuous maps, let us first recall some of the basic facts about skew brace homomorphisms and quotient skew braces.

\begin{definition}
\emph{A \emph{skew subbrace} $S$ of a skew brace $A$ is a subgroup of $(A,+)$ and $(A,\circ).$ If $A$ and $B$ are skew braces, a \emph{skew brace homomorphism} is a map $f\colon A\to B$ of $A$ into $B$ such that  $f(a+a')=f(a)+f(a'),$ $f(a\circ  a')=f(a)\circ f(a')$ for all $a,$ $a'\in A$. The \emph{kernel} of $f$ is  the subset $\mathrm{ker}\,f=\{ a\in A\mid f(a)=e\}$ of $A$. The \emph{image} of a skew brace homomorphism $f$ is the subset $\mathrm{im}\,f=\{b\in B\mid b=f(a)\;\text{for some}\; a\in A\}$ of $B$. Let $f\colon A\to B$ be a skew brace homomorphism and let $I,$ $J$ be ideals of $B$ and $A$ respectively. The \emph{contraction} of $I$ and the \emph{extension} of $J$ are respectively denoted by $I^c$ and $J^e$.}
\end{definition} 

\begin{lemma}\label{qst}
Let $I$ and $J$ respectively be ideals of skew left braces $A$ and $B.$ Let $f\colon A\to B$ be a skew left brace homomorphism. Then
\begin{enumerate}[\upshape (i)]

\item\label{avc} $\mathrm{ker}\,f,$ $f\inv(J)$ are ideals of $A$ and $\mathrm{im}\,f$ is a skew subbrace of $B;$

\item\label{qtsb}
$a\circ I=a+I$ for every $a\in A$ and  $A/I=\{a+I\mid a\in A\}$ is a skew left brace under the operation: $(a+I)+(b+I)=(a+b)+I\;\; \text{and}\;\; (a+I)\circ(b+I)=(a\circ b)+I;$

\item 
there is a bijection between ideals of $A/I$ and ideals of $A$ containing $I;$ 
\item\label{fisj}  $f(I*J)=f(I)*f(J)$ for all $I, J\in \mathrm{Idl}\,A.$ 
\end{enumerate}
\end{lemma}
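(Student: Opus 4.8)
The plan is to establish the four items in the order stated, since each is used in the next, and in every case the strategy is to reduce the skew-brace assertion to the matching facts about the two group structures $(A,+)$ and $(A,\circ)$ and then separately dispatch the single extra clause involving the maps $\lambda_a$. The observation that gets used repeatedly is that a skew brace homomorphism intertwines the $\lambda$'s: from $\lambda_a(b)=-a+a\circ b$ and the fact that $f$ respects both $+$ and $\circ$ we get $f\big(\lambda_a(b)\big)=-f(a)+f(a)\circ f(b)=\lambda_{f(a)}\big(f(b)\big)$, i.e. $f\circ\lambda_a=\lambda_{f(a)}\circ f$.

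For (i): $\mathrm{ker}\,f$ is simultaneously the kernel of $f\colon(A,+)\to(B,+)$ and of $f\colon(A,\circ)\to(B,\circ)$, hence a normal subgroup of each; and $f(b)=e$ gives $f(\lambda_a(b))=\lambda_{f(a)}(e)=e$, so $\lambda_a(\mathrm{ker}\,f)\subseteq\mathrm{ker}\,f$. The same two steps applied to $f\inv(J)$ — the preimage of a normal subgroup is normal for each operation, and $f(b)\in J$ forces $f(\lambda_a(b))=\lambda_{f(a)}(f(b))\in J$ because $J$ is an ideal of $B$ — show $f\inv(J)$ is an ideal of $A$. Finally $\mathrm{im}\,f$ is a subgroup of $(B,+)$ and of $(B,\circ)$, hence a skew subbrace of $B$.

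The first item with genuine skew-brace content is $a\circ I=a+I$ in (ii), and its key input is that for an ideal $I$ one in fact has $\lambda_a(I)=I$, not merely $\lambda_a(I)\subseteq I$: since $a\mapsto\lambda_a$ is a homomorphism $(A,\circ)\to\mathrm{Aut}(A,+)$ (a standard consequence of the brace identity), applying the inclusion with $a\inv$ in place of $a$ gives $\lambda_a\inv(I)\subseteq I$, i.e. $I\subseteq\lambda_a(I)$. Then $a\circ b=a+\lambda_a(b)$ yields $a\circ i=a+\lambda_a(i)\in a+I$, and conversely $a+i=a\circ\lambda_a\inv(i)$ with $\lambda_a\inv(i)\in I$, so the $\circ$-coset $a\circ I$ equals the $+$-coset $a+I$. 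Consequently the $\circ$-cosets of $I$ coincide with its $+$-cosets, so $A/I=\{a+I\mid a\in A\}$ is a group under each induced operation (normality of $I$ in $(A,+)$ and in $(A,\circ)$ makes them well defined), and the brace identity in $A/I$ is obtained by projecting that of $A$: $(a+I)\circ\big((b+I)+(c+I)\big)=\big(a\circ(b+c)\big)+I=\big(a\circ b-a+a\circ c\big)+I$, which is exactly the right-hand side of the identity evaluated in $A/I$.

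Part (iii) is the correspondence theorem for the surjective skew brace homomorphism $\pi\colon A\to A/I$ with $\mathrm{ker}\,\pi=I$: by (i) the assignments $\bar J\mapsto\pi\inv(\bar J)$ and $J\mapsto\pi(J)$ send ideals of $A/I$ to ideals of $A$ containing $I$ and back — for the second one notes that $\pi(J)$ is normal as the image of a normal subgroup under a surjection and $\lambda$-invariant because $\lambda_{\bar a}(\pi(J))=\pi(\lambda_a(J))\subseteq\pi(J)$ — and the two are mutually inverse since $\pi\inv\pi(J)=J$ for $J\supseteq\mathrm{ker}\,\pi$ and $\pi\,\pi\inv(\bar J)=\bar J$ by surjectivity. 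Part (iv) reduces to the elementwise identity $f(i*j)=f(-i+i\circ j-j)=-f(i)+f(i)\circ f(j)-f(j)=f(i)*f(j)$, which shows $f$ carries $\{i*j\mid i\in I,\,j\in J\}$ onto $\{f(i)*f(j)\mid i\in I,\,j\in J\}$; since $f$ is a homomorphism of $(A,+)$ it commutes with forming the additive subgroup generated by a set, and (interpreting the generated ideal inside the skew subbrace $\mathrm{im}\,f$) with forming the generated ideal, so $f(I*J)=f(I)*f(J)$ in either reading of $*$. Among all of this, the only step that is more than a transfer of standard group theory through the two operations at once is the upgrade of $\lambda_a(I)\subseteq I$ to $\lambda_a(I)=I$ underlying $a\circ I=a+I$ in (ii); that is the one place where a small idea is needed, and I expect it to be the crux of the argument.
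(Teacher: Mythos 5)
Your proof is correct. There is nothing in the paper to compare it against: Lemma \ref{qst} is stated without proof, these being standard facts about skew braces drawn from the literature (essentially \cite{GV17}, \cite{CSV19}), so your write-up supplies justifications the paper omits. The reductions to the two underlying group structures, the intertwining identity $f(\lambda_a(b))=\lambda_{f(a)}(f(b))$, and in particular the upgrade of $\lambda_a(I)\subseteq I$ to $\lambda_a(I)=I$ via $\lambda_{a^{-1}}=(\lambda_a)^{-1}$ --- which is indeed the one genuinely brace-theoretic point, needed for $a\circ I=a+I$ --- are all handled correctly. Your caveat about the two possible readings of $I*J$ in part (iv) is also apt: for the set-theoretic reading of Definition \ref{pri}, which is the one actually invoked in the proof of Proposition \ref{conmap}(\ref{sbcm}), the elementwise computation $f(i*j)=f(i)*f(j)$ already gives the claim.
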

 
\begin{proposition}\label{conmap}
Suppose $f\colon A\to B$ is a skew brace homomorphism and define the map $f^!\colon  \mathrm{Spec}\,B\to \mathrm{Spec}\,A$ by  $f^!(P)=f\inv(P)$, where $P\in\mathrm{Spec}\,B.$ Then
\begin{enumerate}[\upshape (i)]
\item every prime ideal of $A$ is a contracted ideal if and only if $f^!$ is surjective;

\item if every prime ideal of $B$ is an extended ideal then $f^!$ is injective; 

\item\label{sbcm} \label{contxr}  $f^!$ is    continuous;

\item\label{shme}  if $f$ is  surjective, then $\mathrm{im}(f^!)$ is homeomorphic to $H(\mathrm{ker}\,f)$;
 
\item\label{idkn}  the image of $f^!$ is dense in $\mathrm{Spec}\,A$ if and only if $\mathrm{ker}\,f\subseteq  \mathrm{Nil}\,A.$   
\end{enumerate}
%[\cite{AM69}: Chap-3, Ex.20]
\end{proposition}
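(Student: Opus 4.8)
The plan is to treat the five parts in a convenient order, beginning with the verification --- implicit in the statement --- that $f^!$ is well defined. Given $P\in\mathrm{Spec}\,B$, the set $f^{-1}(P)$ is an ideal of $A$ by Lemma~\ref{qst}(\ref{avc}), and it is proper as soon as $\mathrm{im}\,f\nsubseteq P$ (a harmless nondegeneracy condition which I would record). Primeness of $f^{-1}(P)$ is exactly what Definition~\ref{pri} is engineered to give: for subsets $S,T$ of $A$ with $S*T\subseteq f^{-1}(P)$ one applies $f$, using $f(s*t)=-f(s)+f(s)\circ f(t)-f(t)=f(s)*f(t)$, to get $f(S)*f(T)\subseteq P$; hence $f(S)\subseteq P$ or $f(T)\subseteq P$, that is, $S\subseteq f^{-1}(P)$ or $T\subseteq f^{-1}(P)$. (This is the point flagged in the text: taking $I$, $J$ to be arbitrary subsets, rather than ideals, is what makes inverse images of primes prime.) For continuity, part~(\ref{sbcm}), I would then observe that for an ideal $I$ of $A$ we have $(f^!)^{-1}(H(I))=\{\,P\in\mathrm{Spec}\,B\mid I\subseteq f^{-1}(P)\,\}=\{\,P\mid f(I)\subseteq P\,\}=H(\langle f(I)\rangle)=H(I^{e})$, so the preimage of every basic closed set is closed.

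For the injectivity statement, suppose every prime of $B$ is an extended ideal and that $f^{-1}(P_{1})=f^{-1}(P_{2})=:P$, say $P_{i}=J_{i}^{e}=\langle f(J_{i})\rangle$ with $J_{i}\in\mathrm{Idl}\,A$. Then $J_{i}\subseteq f^{-1}(f(J_{i}))\subseteq f^{-1}(P_{i})=P$, so $f(J_{i})\subseteq f(P)$ and $P_{i}=\langle f(J_{i})\rangle\subseteq\langle f(P)\rangle$; conversely $f(P)=f(f^{-1}(P_{i}))\subseteq P_{i}$ gives $\langle f(P)\rangle\subseteq P_{i}$. Hence $P_{1}=\langle f(P)\rangle=P_{2}$. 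For part~(\ref{shme}), assume $f$ surjective, so that $B\cong A/\mathrm{ker}\,f$; by Lemma~\ref{qst}(iii) the ideals of $B$ correspond bijectively to the ideals of $A$ containing $\mathrm{ker}\,f$, and I would check that this correspondence carries prime ideals to prime ideals. Then $f^!$ is a bijection of $\mathrm{Spec}\,B$ onto $\{\,P\in\mathrm{Spec}\,A\mid \mathrm{ker}\,f\subseteq P\,\}=H(\mathrm{ker}\,f)$, a closed set; it is continuous by~(\ref{sbcm}), and it is moreover a closed map onto $H(\mathrm{ker}\,f)$ because $f^!(H_{B}(I))=H_{A}(I^{c})$. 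A continuous closed bijection is a homeomorphism.

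The last two parts share a common core, and their ``easy'' directions are formal. If $f^!$ is surjective, every $P\in\mathrm{Spec}\,A$ equals $f^{-1}(Q)=Q^{c}$ for some $Q\in\mathrm{Spec}\,B$, hence is a contracted ideal. And since $HK(S)$ is the closure of $S$ (as shown earlier) and $K(\mathrm{im}\,f^!)=\bigcap_{Q\in\mathrm{Spec}\,B}f^{-1}(Q)=f^{-1}(\mathrm{Nil}\,B)$, the closure of $\mathrm{im}\,f^!$ is $H(f^{-1}(\mathrm{Nil}\,B))$, which equals $\mathrm{Spec}\,A$ precisely when $f^{-1}(\mathrm{Nil}\,B)\subseteq\mathrm{Nil}\,A$; as $\mathrm{ker}\,f=f^{-1}(0)\subseteq f^{-1}(\mathrm{Nil}\,B)$ always holds, density of $\mathrm{im}\,f^!$ forces $\mathrm{ker}\,f\subseteq\mathrm{Nil}\,A$.

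For the converse directions one must, starting from a given prime $P$ of $A$, manufacture a prime $Q$ of $B$ with prescribed contraction, and the plan is a Zorn's lemma argument. Take an ideal $Q$ of $B$ maximal subject to $f^{-1}(Q)=P$ (for the first part, where $P$ is a contracted prime, so the family contains the ideal $I$ with $P=I^{c}$) or subject to $f^{-1}(Q)\subseteq P$ (for part~(\ref{idkn}), where the family is nonempty because $\mathrm{ker}\,f=f^{-1}(0)\subseteq\mathrm{Nil}\,A\subseteq P$); in both cases the family is closed under unions of chains, since $f^{-1}$ commutes with directed unions of ideals. One then shows such a maximal $Q$ is prime: given subsets $S,T$ of $B$ with $S*T\subseteq Q$ but $S\nsubseteq Q$ and $T\nsubseteq Q$, choose $s\in S\setminus Q$ and $t\in T\setminus Q$; maximality makes $\langle Q\cup\{s\}\rangle$ and $\langle Q\cup\{t\}\rangle$ drop out of the family, producing $a\in f^{-1}(\langle Q\cup\{s\}\rangle)\setminus P$ and $b\in f^{-1}(\langle Q\cup\{t\}\rangle)\setminus P$, and one wants to conclude $a*b\in f^{-1}(Q)\subseteq P$, which contradicts primeness of $P$ applied to the singletons $\{a\}$ and $\{b\}$. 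The main obstacle is precisely this last deduction: because $*$ is neither associative nor distributive over $+$, the relation $s*t\in Q$ does not by itself propagate to $\langle Q\cup\{s\}\rangle*\langle Q\cup\{t\}\rangle\subseteq Q$, so pinning down $a*b$ calls for a subtler choice of the enlargements (or of $a$ and $b$) than in the commutative-ring prototype --- this is where I expect the real work to lie. (For part~(\ref{idkn}) one may alternatively factor $f$ through $A/\mathrm{ker}\,f$ and invoke~(\ref{shme}) to reduce to an injective homomorphism, but the residual point --- that an injective skew-brace map induces a dominant map of spectra --- is of the same ``lying over'' flavour.)
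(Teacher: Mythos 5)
Your treatment of (ii), (iii) and (iv) is sound and essentially the paper's: primeness of $f\inv(P)$ via $f(s*t)=f(s)*f(t)$, $(f^!)\inv(H(I))=H(\langle f(I)\rangle)$, the $P^{ece}=P^{e}$ manipulation for injectivity, and the closed continuous bijection onto $H(\mathrm{ker}\,f)$ in the surjective case. Your remark that $f^!$ may fail to be well defined when $\mathrm{im}\,f\subseteq P$ (so $f\inv(P)=A$) is a genuine point the paper does not address. Where you diverge is in (i) and (v). For (i) you have taken on more than the paper intends: its proof reads ``contracted ideal'' as ``contraction of a \emph{prime} ideal of $B$,'' under which the statement is a tautology and the first sentence of your paragraph on the easy directions already proves both implications. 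Your lying-over programme addresses the stronger (Atiyah--Macdonald-style) reading, and, as you concede, you cannot complete it because $*$ is neither associative nor distributive; so on your reading the converse of (i) remains unproved.

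The genuine gap is in the converse of (v). You correctly compute $\mathrm{cl}(\mathrm{im}\,f^!)=HK(\mathrm{im}\,f^!)=H\bigl(f\inv(\mathrm{Nil}\,B)\bigr)$ and observe that density would follow from $f\inv(\mathrm{Nil}\,B)\subseteq \mathrm{Nil}\,A$, which is strictly stronger than the hypothesis $\mathrm{ker}\,f\subseteq\mathrm{Nil}\,A$; in the ring case the two are reconciled by the elementwise identity $f\inv(\mathrm{rad}\,0)=\mathrm{rad}(\mathrm{ker}\,f)$, which is unavailable here precisely because radicals of skew-brace ideals have no power characterization. You name this obstruction but do not overcome it, so the ``if'' direction of (v) is not established. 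To be fair, the paper's own proof of (v) simply asserts $\mathrm{cl}(\mathrm{im}\,f^!)=H(\mathrm{ker}\,f)$, quoting a computation from (iv) that was carried out only under the hypothesis that $f$ is surjective; so it silently skips the very step you identify. Your analysis is therefore the more honest one, but as a proof it is incomplete at exactly this point, and any repair will require either a lying-over theorem for the primes of Definition \ref{pri} or the identity $\mathrm{Rad}(\mathrm{ker}\,f)=f\inv(\mathrm{Nil}\,B)$ proved by other means.
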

  
\begin{proof}
(i). Let $P\in \mathrm{Spec}\,A$ and $P = Q^c$ for some $Q\in \mathrm{Spec}\,B.$ Then, $f^!(Q) = f\inv(Q) = P.$ Hence  $f^!$ is surjective.
Conversely, if $f^!$ is surjective, then for any $P \in \mathrm{Spec}\,A,$ we have $P= f^!(Q) = Q^c$, as required.

(ii). Let $Q\in \mathrm{Spec}\,B$ and $Q = P^e.$ Then, $f^!(Q) = P^{ec}\supseteq P.$ Suppose $f^!(Q) = f^!(Q').$ Then,
$P^{ec} = (P')^{ec}$, where $Q' = (P')^e,$ this implies $P^{ece} = (P')^{ece},$ which implies $P^e = (P')^e$ and that  $Q = Q'.$

(iii). To show $f^!$ is continuous, we first show that $f\inv(P)\in \mathrm{Spec}\,A,$ whenever $P\in \mathrm{Spec}\,B$.  Let $I*J\subseteq f\inv(P),$ where $I, J \in \mathrm{Idl}\,A.$ Then $f(I*J)\subseteq P,$ and by Lemma \ref{qst}(\ref{fisj}) we have $f(I)*f(J)\subseteq P$. Since $P$ is prime, by Definition \ref{pri} either $f(I)\subseteq P$ or $f(J)\subseteq P,$ and that implies either  $I\subseteq f\inv(P)$ or $J\subseteq f\inv(P)$. If $H(I)$ is a closed subset of $\mathrm{Spec}\,A$, then for any $Q\in \mathrm{Spec}\,B,$ we have:
\begin{align*}
Q\!\in\! (f^!)\inv (H(I))\!\Leftrightarrow\! f^!(Q)\in H(I)\!\Leftrightarrow\! f\inv(Q)\!\in\! H(I)\!\Leftrightarrow\! I\subseteq f\inv(Q)\!\Leftrightarrow\! Q\!\in\! H(\langle f(I)\rangle).
\end{align*}

(iv). By Lemma \ref{qst}(\ref{avc}), $\mathrm{im}(f^!)$ and $\mathrm{ker}\,f$ are ideals of $A$. We first show that $\mathrm{cl}(f^!(H(I)))=H(f\inv(I))$ for any $I\in \mathrm{Idl}\,B$ and for that it is sufficient to show: $f^!(H(I))=H(f\inv(I))$. We observe:
$$ 
P\in H(f\inv(I))\Leftrightarrow f(f\inv(I)) \subseteq f(P)
\Leftrightarrow  P = f\inv(f(P))=f^!(f(P))\in f^!(H(I)),$$
and by taking $I=0$, we obtain  $\mathrm{im}(f^!)=H(\mathrm{ker}\,f).$
This implies $f^!$ induces a continuous bijection between $\mathrm{im}(f^!)$ and $H(\mathrm{ker}\,f).$ The continuity of  
$(f^!)\inv$ also now follows from the above. Hence we have the desired homeomorphism.
 
(v).  Note that $\mathrm{im}(f^!)$ is dense in $\mathrm{Spec}\,A$ if and only if $\mathrm{cl}(\mathrm{im}\,f^!)=H(\mathrm{ker}\,f)=\mathrm{Spec}\,A$, and this occurs if and only if $\mathrm{ker}\,f\subseteq P$ for all $P\in \mathrm{Spec}\,A,$ and that holds if and only if $\mathrm{ker}\,f\subseteq \mathrm{Nil}\,A$.
\end{proof}

\begin{corollary}\label{homeo}
For a skew brace $A$, the spaces $\mathrm{Spec}\,A$ and $\mathrm{Spec}(A/\mathrm{Nil}\,A)$ are homeomorphic.
\end{corollary}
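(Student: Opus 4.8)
\emph{Proof proposal.} The plan is to apply Proposition~\ref{conmap} to the canonical projection $\pi\colon A\to A/\mathrm{Nil}\,A$, so the first task is to check that this projection is legitimate. By Definition~\ref{radi}, $\mathrm{Nil}\,A$ is the intersection of all prime ideals of $A$, hence an ideal of $A$ by Lemma~\ref{inid}; therefore $A/\mathrm{Nil}\,A$ is a skew brace and $\pi$ a surjective skew brace homomorphism by Lemma~\ref{qst}(\ref{qtsb}), with $\mathrm{ker}\,\pi=\mathrm{Nil}\,A$ (a coset $a+\mathrm{Nil}\,A$ is the identity of $A/\mathrm{Nil}\,A$ precisely when $a\in\mathrm{Nil}\,A$).

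Next I would invoke Proposition~\ref{conmap}(\ref{shme}): since $\pi$ is surjective, the induced map $\pi^!\colon\mathrm{Spec}(A/\mathrm{Nil}\,A)\to\mathrm{Spec}\,A$, $\pi^!(P)=\pi\inv(P)$, is a homeomorphism onto $\mathrm{im}(\pi^!)=H(\mathrm{ker}\,\pi)=H(\mathrm{Nil}\,A)$.

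It then only remains to note that $H(\mathrm{Nil}\,A)$ is the whole of $\mathrm{Spec}\,A$. Indeed, $\mathrm{Nil}\,A=\bigcap_{P\in\mathrm{Spec}\,A}P\subseteq P$ for every prime ideal $P$, so every point of $\mathrm{Spec}\,A$ lies in $H(\mathrm{Nil}\,A)$; equivalently, by Lemma~\ref{hihr} applied to the zero ideal together with Proposition~\ref{ztp}(\ref{vza}), $H(\mathrm{Nil}\,A)=H(\mathrm{Rad}\,0)=H(0)=\mathrm{Spec}\,A$. Composing these facts, $\pi^!$ is the desired homeomorphism between $\mathrm{Spec}(A/\mathrm{Nil}\,A)$ and $\mathrm{Spec}\,A$.

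There is no genuinely hard step here: the statement is a bookkeeping consequence of Proposition~\ref{conmap}(\ref{shme}). The only points that need a moment's attention are the two just used, namely that $\mathrm{Nil}\,A$ is actually an ideal (so that the quotient exists and $\pi$ makes sense) and that $H(\mathrm{Nil}\,A)=\mathrm{Spec}\,A$, which is what upgrades ``homeomorphic onto its image'' to ``homeomorphic to $\mathrm{Spec}\,A$''.
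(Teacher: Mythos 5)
Your proposal is correct and follows the same route as the paper: apply Proposition~\ref{conmap}(\ref{shme}) to the canonical surjection $A\to A/\mathrm{Nil}\,A$. You merely spell out the step the paper leaves implicit, namely that $H(\mathrm{ker}\,\pi)=H(\mathrm{Nil}\,A)=\mathrm{Spec}\,A$ because $\mathrm{Nil}\,A$ lies in every prime ideal.
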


\begin{proof}
Note that by Lemma \ref{qst}(\ref{qtsb}), $A/\mathrm{Nil}\,A$ is a skew brace. On applying Proposition \ref{conmap}(\ref{shme}) on the homomorphism $A\to A/\mathrm{Nil}\,A$ we obtain the desired result. 
\end{proof}

\begin{proposition} 
Let $I$ be an ideal of a skew brace $A$ and $J=I^c$ be an ideal of a skew brace $A'.$ Let $\bar{\phi}\colon A/I\to A'/J$ be the skew brace homomorphism induced by the skew brace homomorphism $f\colon A\to A'.$ Then the restriction of the map $\phi_*\colon \mathrm{Spec}\,A'\to \mathrm{Spec}\,A$ to $H(J)$ is the map $\bar{\phi}_*\colon \mathrm{Spec}(A'/J)\to \mathrm{Spec}(A/I).$
\end{proposition}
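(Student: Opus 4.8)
The plan is to pin down the identification under which ``the restriction of $\phi_*$ to $H(J)$ is $\bar\phi_*$'' and then reduce everything to the functoriality of $\mathrm{Spec}$ together with the defining property of the induced map. Throughout, $\phi_*(P)=f\inv(P)$ is the map $f^!$ of Proposition~\ref{conmap}, and $\bar\phi_*(\bar P)=\bar\phi\inv(\bar P)$. Let $\pi\colon A\to A/I$ and $\pi'\colon A'\to A'/J$ denote the canonical quotient homomorphisms; the hypothesis on $I$ and $J$ is exactly what makes $\bar\phi$ the unique skew brace homomorphism with $\bar\phi\circ\pi=\pi'\circ f$, so in particular $f(I)\subseteq J$. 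Since $\mathrm{ker}\,\pi'=J$ by Lemma~\ref{qst}(\ref{avc}), Proposition~\ref{conmap}(\ref{shme}) applied to the surjection $\pi'$ gives a homeomorphism $\theta'\colon \mathrm{Spec}(A'/J)\to H(J)$, $\bar P\mapsto (\pi')\inv(\bar P)$, whose inverse carries a prime $P\supseteq J$ to $P/J$ (using the correspondence of Lemma~\ref{qst} between ideals of $A'/J$ and ideals of $A'$ containing $J$); write $\theta\colon \mathrm{Spec}(A/I)\to H(I)$ for the analogous homeomorphism attached to $\pi$. Under $\theta'$ and $\theta$, the assertion to be proved becomes the single equation $\phi_*\circ\theta'=\theta\circ\bar\phi_*$.

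Before proving that, I would record that the restriction is genuinely defined, i.e. $\phi_*(H(J))\subseteq H(I)$: if $J\subseteq P$ then $\phi_*(P)=f\inv(P)\supseteq f\inv(J)\supseteq I$ since $f(I)\subseteq J$, and $\phi_*(P)$ is prime by the argument in the proof of Proposition~\ref{conmap}(\ref{contxr}); the same argument applied to $\bar\phi$ shows $\bar\phi_*$ indeed takes values in $\mathrm{Spec}(A/I)$. So all four maps in the square have the claimed domains and codomains.

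The core of the proof is a one-line computation with preimages. Fix a prime $\bar P$ of $A'/J$ and set $P=(\pi')\inv(\bar P)\in H(J)$, so that $\theta'(\bar P)=P$ and $\bar P=P/J$. Then
\[
\phi_*\big(\theta'(\bar P)\big)=f\inv(P)=f\inv\big((\pi')\inv(\bar P)\big)=(\pi'\circ f)\inv(\bar P),
\]
and invoking $\pi'\circ f=\bar\phi\circ\pi$,
\[
(\pi'\circ f)\inv(\bar P)=(\bar\phi\circ\pi)\inv(\bar P)=\pi\inv\big(\bar\phi\inv(\bar P)\big)=\theta\big(\bar\phi_*(\bar P)\big).
\]
Hence $\phi_*\circ\theta'=\theta\circ\bar\phi_*$, which is precisely the statement that the restriction of $\phi_*$ to $H(J)$ equals $\bar\phi_*$.

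I do not expect a real obstacle here. The only delicate points are purely organisational: setting up the two canonical homeomorphisms $\mathrm{Spec}(A'/J)\cong H(J)$ and $\mathrm{Spec}(A/I)\cong H(I)$ correctly, and reading the hypothesis on $I$ and $J$ as the condition $f(I)\subseteq J$ needed for $\bar\phi$ to exist. After that, the statement is just the instance $(\pi'\circ f)^!=f^!\circ(\pi')^!$ of functoriality combined with $\pi'\circ f=\bar\phi\circ\pi$.
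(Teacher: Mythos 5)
Your proof is correct and follows essentially the same route as the paper: both arguments reduce the claim to the commutativity of the square of spectra induced by $\pi'\circ f=\bar\phi\circ\pi$ together with the identification of $\mathrm{Spec}(A'/J)$ with $H(J)$ (and $\mathrm{Spec}(A/I)$ with $H(I)$) coming from Proposition~\ref{conmap}(\ref{shme}); you merely write out the preimage computation that the paper leaves implicit in its diagram. Your reading of the hypothesis as $f(I)\subseteq J$ is also the right one --- the paper's own proof uses $J^c=I^{ec}\supseteq I$, i.e.\ it treats $J$ as the extension of $I$, consistent with your setup.
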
 

\begin{proof} 
Note that below the left commutative diagram induces the right commutative diagram.
\begin{center} 
$
\xymatrix{
A\ar[r]^{\phi}\ar[d]_{\pi^{I}} & A'\ar[d]^{\pi^{J}}\\
A/I\ar[r]^{\bar{\phi}} & A'/J
}
$\qquad
$
\xymatrix{
\mathrm{Spec}\,A &\mathrm{Spec}\,A'\ar[l]_{\phi_*}\\
\mathrm{Spec}(A/I)\ar[u]^{\pi^{I}_*}\ar[r]^{\bar{\phi_*}} &\mathrm{Spec}(A'/J).\ar[u]_{\pi^{J}_*}
}
$
\end{center}
Moreover, $K\in H(J)$ implies $\phi_*(K)=K^c\supseteq J^c=I^{ec}\supseteq I,$ that is, $\phi_*(H(J))\subseteq H(I).$
Now the desired result follows from Proposition \ref{conmap}(\ref{shme}). 
\end{proof}

\begin{definition} \emph{According to Jespers, Kubat, Van Antwerpen, \&  Vendramin \cite[Definition 4.1]{JKAV21}  
the weight $\omega(A)$ of non-zero skew brace $A$ is defined as
the minimal number of elements of A needed to generate $A$ (as an ideal). By convention,
we put $\omega(A)=1$ if $A = 0.$ }
\end{definition}

Recall that a skew left
brace A is said to be \emph{Noetherian} if every ascending chain of ideals of A is eventually
stationary. It follows immediately that a skew left brace is Noetherian if and only if all its ideals have
finite weight. Also recall that a topological space $X$ is said to be \emph{Noetherian} if the closed subsets of $X$ satisfy the descending chain condition. 

\begin{proposition}\label{fwn}
If all ideals of $A$ have
finite weight, then $\mathrm{Spec}\,A$ is a Noetherian space.  
\end{proposition}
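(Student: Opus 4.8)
The plan is to unwind the definition of a Noetherian space --- the descending chain condition on closed subsets --- into the ascending chain condition on radical ideals of $A$, and then invoke the equivalence recalled just before the statement (finite weight of all ideals $\iff$ $A$ Noetherian).

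First I would recall that, by Definition \ref{ztd}, every closed subset of $\mathrm{Spec}\,A$ has the form $H(I)$ for some ideal $I$ of $A$, and that by Lemma \ref{hihr} we may take $I$ to be radical, that is, $I=\mathrm{Rad}\,I$. Then I would start from an arbitrary descending chain $H(I_{1})\supseteq H(I_{2})\supseteq H(I_{3})\supseteq\cdots$ of closed subsets, with each $I_{n}$ radical, and observe that the inclusion $H(I_{n+1})\subseteq H(I_{n})$ forces $I_{n}\subseteq I_{n+1}$: indeed every prime ideal containing $I_{n+1}$ contains $I_{n}$, so $I_{n}$ lies in the intersection of all such primes, which by Definition \ref{radi} is exactly $\mathrm{Rad}\,I_{n+1}=I_{n+1}$. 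This produces an ascending chain $I_{1}\subseteq I_{2}\subseteq I_{3}\subseteq\cdots$ of ideals of $A$.

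Next I would apply the hypothesis. Since every ideal of $A$ has finite weight, $A$ is Noetherian, so the chain $\{I_{n}\}$ stabilizes: there is $N$ with $I_{n}=I_{N}$ for all $n\geq N$. Applying the operator $H$ then gives $H(I_{n})=H(I_{N})$ for all $n\geq N$, so the original descending chain of closed subsets stabilizes. Hence the closed subsets of $\mathrm{Spec}\,A$ satisfy the descending chain condition, and $\mathrm{Spec}\,A$ is a Noetherian space.

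I do not anticipate a genuine obstacle here: the argument is essentially bookkeeping through the order-reversing correspondence $I\mapsto H(I)$ between radical ideals and closed subsets. The only point needing a little care is the reduction to radical ideals at the outset, which is what lets the inclusion $H(I_{n+1})\subseteq H(I_{n})$ yield $I_{n}\subseteq I_{n+1}$ on the nose rather than merely $\mathrm{Rad}\,I_{n}\subseteq\mathrm{Rad}\,I_{n+1}$; and one should make sure the recalled equivalence is used in the direction actually needed, namely ``all ideals of finite weight $\Rightarrow$ Noetherian''.
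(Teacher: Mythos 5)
Your proposal is correct and follows essentially the same route as the paper: convert the descending chain of closed sets into an ascending chain of ideals via Lemma \ref{hihr} and invoke the Noetherian hypothesis. In fact your explicit reduction to radical ideals, so that $H(I_{n+1})\subseteq H(I_n)$ genuinely yields $I_n\subseteq I_{n+1}$, makes precise a step that the paper's proof passes over rather quickly.
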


\begin{proof}
It suffices to show that a collection of closed sets in $\mathrm{Spec}\,A$ satisfy descending chain condition. Let $H(I_1)\supseteq H(I_2)\supseteq \cdots$
be a descending chain of closed sets in $\mathrm{Spec}\,A$. Then by Lemma \ref{hihr}, $I_1\subseteq I_2\subseteq \cdots$ is an ascending chain of ideals in $A.$ Since $A$ is Noetherian, it stabilizes at some $n \in \mathds{N}.$ Hence, $H (I_n) = H(I_{n+k})$ for any $k.$ Thus $\mathrm{Spec}\,A$ is Noetherian.
\end{proof}

\begin{corollary}
The set of minimal prime ideals in a Noetherian skew brace is finite.
\end{corollary}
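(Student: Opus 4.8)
The plan is to combine the Noetherian property of the space $\mathrm{Spec}\,A$ (Proposition~\ref{fwn}) with the identification of irreducible components carried out in Lemma~\ref{thmirre}. Recall that a Noetherian topological space has only finitely many irreducible components: indeed, if the collection of closed subsets that are \emph{not} expressible as a finite union of irreducible closed sets were nonempty, the descending chain condition would furnish a minimal such set $S$, and $S$ would itself be reducible, say $S = S_1 \cup S_2$ with $S_i \subsetneq S$ closed; each $S_i$ would then be a finite union of irreducibles by minimality, forcing $S$ to be one too, a contradiction. Hence $\mathrm{Spec}\,A$ itself is a finite union of irreducible closed subsets, and by Proposition~\ref{irrpp}(\ref{mircv}) the irreducible components are among these finitely many pieces, so there are only finitely many of them.

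The key steps, in order, are as follows. First, invoke the equivalence recorded just before Proposition~\ref{fwn}: a Noetherian skew brace has all ideals of finite weight, so by Proposition~\ref{fwn} the space $\mathrm{Spec}\,A$ is Noetherian. Second, apply the general topological fact above that a Noetherian space has finitely many irreducible components. Third, apply Lemma~\ref{thmirre}, which says the irreducible components of $\mathrm{Spec}\,A$ are exactly the closed sets $H(P)$ with $P$ a minimal prime ideal. Fourth, observe that by Lemma~\ref{T0lem}(\ref{clpv}) the assignment $P \mapsto H(P) = \mathrm{cl}\{P\}$ is injective on $\mathrm{Spec}\,A$ (two primes with the same closure are equal, since $H(P) = H(Q)$ forces $P \subseteq Q$ and $Q \subseteq P$), so distinct minimal primes give distinct irreducible components. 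Combining, the set of minimal prime ideals of $A$ injects into the finite set of irreducible components of $\mathrm{Spec}\,A$, hence is finite.

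I expect the only genuine content to be the standard lemma that a Noetherian space has finitely many irreducible components, and whether to include its (short, Noetherian-induction) proof inline or merely cite it to \textsc{G\"{o}rtz \& Wedhorn}~\cite{GW20}; since the paper has already cited that reference for irreducibility basics, a citation is probably cleanest, with perhaps a one-sentence reminder of the argument. The main (minor) subtlety to be careful about is the injectivity in the last step: one must note that the correspondence between minimal primes and components is a genuine bijection, not merely a surjection, which is immediate from $T_0$-ness (Proposition~\ref{t0a}) or directly from Lemma~\ref{T0lem}(\ref{clpv}). No hard obstacle is anticipated; the result is a formal consequence of results already in hand.

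\begin{proof}
Since $A$ is Noetherian, every ascending chain of ideals of $A$ is eventually stationary, which (as recalled above) is equivalent to all ideals of $A$ having finite weight. By Proposition~\ref{fwn}, $\mathrm{Spec}\,A$ is then a Noetherian topological space. A Noetherian topological space has only finitely many irreducible components: if the family of closed subsets not expressible as a finite union of irreducible closed subsets were nonempty, the descending chain condition would give a minimal element $S$ of this family; such an $S$ is necessarily reducible, so $S = S_{\scriptscriptstyle 1}\cup S_{\scriptscriptstyle 2}$ with $S_{\scriptscriptstyle 1}, S_{\scriptscriptstyle 2}\subsetneq S$ closed, and by minimality each $S_{i}$ is a finite union of irreducible closed subsets, whence so is $S$, a contradiction. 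Thus $\mathrm{Spec}\,A$ is a finite union of irreducible closed subsets, and by Proposition~\ref{irrpp}(\ref{mircv}) its irreducible components are among the (finitely many) maximal members of this finite collection. By Lemma~\ref{thmirre}, the irreducible components of $\mathrm{Spec}\,A$ are exactly the closed sets $H(P)$ with $P$ a minimal prime ideal of $A$. Finally, by Lemma~\ref{T0lem}(\ref{clpv}), $H(P)=\mathrm{cl}{\{P\}}$, so if $H(P)=H(Q)$ for primes $P, Q$, then $P\subseteq Q$ and $Q\subseteq P$, i.e. $P=Q$; hence the map $P\mapsto H(P)$ is an injection from the set of minimal prime ideals of $A$ into the finite set of irreducible components of $\mathrm{Spec}\,A$. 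Therefore $A$ has only finitely many minimal prime ideals.
\end{proof}
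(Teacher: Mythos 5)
Your proof is correct and follows essentially the same route as the paper's: Proposition~\ref{fwn} gives Noetherianity of $\mathrm{Spec}\,A$, the standard fact that a Noetherian space has finitely many irreducible components is applied, and Lemma~\ref{thmirre} identifies these components with the sets $H(P)$ for $P$ minimal prime. You simply supply more detail than the paper does, namely the Noetherian-induction argument for finiteness of components and the injectivity of $P\mapsto H(P)$, both of which the paper leaves implicit.
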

%[\cite{AM69}: Chap-6, Ex.9] 

\begin{proof}
By Proposition \ref{fwn}, $\mathrm{Spec}\,A$ is Noetherian, thus $\mathrm{Spec}\,A$ has  finitely many irreducible
components. By Lemma \ref{thmirre}, every irreducible closed subset of $\mathrm{Spec}\,A$
is of the form $H(P),$ where $P$ is a minimal prime ideal. Thus $H(P)$ is irreducible components if and only if $P$ is minimal prime. Hence, $A$ has
only finitely many minimal prime ideals. 
\end{proof}

The following well-known result holds for any topological space and hence for $\mathrm{Spec}\,A$.
 
\begin{proposition}
If $A$ is a skew brace, then the following are equivalent:
\begin{enumerate}[\upshape (i)]
\item $\mathrm{Spec}\,A$ is Noetherian.
\item Every open subspace of $\mathrm{Spec}\,A$ is compact.
\item Every subspace of $\mathrm{Spec}\,A$ is compact.
\end{enumerate}
\end{proposition}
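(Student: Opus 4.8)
The plan is to prove the cycle of implications $(iii)\implies(ii)\implies(i)\implies(iii)$, using throughout the elementary observation that, by passing to complements in the definition, a topological space is Noetherian if and only if its open subsets satisfy the ascending chain condition. The implication $(iii)\implies(ii)$ is immediate, since an open subspace is in particular a subspace.

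For $(ii)\implies(i)$, I would take an ascending chain $U_{\scriptscriptstyle 1}\subseteq U_{\scriptscriptstyle 2}\subseteq\cdots$ of open subsets of $\mathrm{Spec}\,A$ and set $U=\bigcup_{n} U_n$. Then $U$ is open, hence compact by hypothesis; the family $\{U_n\}_n$ is an open cover of $U$, so finitely many of the $U_n$ already cover $U$, and since the chain is increasing this forces $U=U_N$ for some $N$, whence $U_n=U_N$ for all $n\geq N$. Thus the open sets of $\mathrm{Spec}\,A$ satisfy the ascending chain condition, i.e. the closed sets satisfy the descending chain condition, so $\mathrm{Spec}\,A$ is Noetherian.

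For $(i)\implies(iii)$ I would proceed in two steps. First, check that an arbitrary subspace $Y$ of a Noetherian space $\mathrm{Spec}\,A$ is again Noetherian: given a descending chain $C_{\scriptscriptstyle 1}\supseteq C_{\scriptscriptstyle 2}\supseteq\cdots$ of closed subsets of $Y$, the closures $\mathrm{cl}(C_i)$ taken in $\mathrm{Spec}\,A$ form a descending chain of closed sets, which stabilizes by hypothesis; intersecting with $Y$ and using that $\mathrm{cl}(C_i)\cap Y=C_i$ (as $C_i$ is closed in $Y$) shows the original chain stabilizes. Second, show that every Noetherian space is compact: given an open cover $\{V_{\alpha}\}_{\alpha}$, consider the collection of those open sets that are finite unions of the $V_{\alpha}$'s; by the ascending chain condition on open sets this collection has a maximal element $W=V_{\alpha_{\scriptscriptstyle 1}}\cup\cdots\cup V_{\alpha_{\scriptscriptstyle k}}$. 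Were $W$ a proper subset, a point outside it would lie in some $V_{\beta}$, making $W\cup V_{\beta}$ a strictly larger finite union, a contradiction; hence $W$ is the whole space and we have extracted a finite subcover. Combining the two steps: $\mathrm{Spec}\,A$ Noetherian $\implies$ every subspace Noetherian $\implies$ every subspace compact.

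The routine part is the translation of the chain conditions via complementation and the maximal-finite-union argument for compactness; the one point I would flag as needing genuine (if small) care is the verification that the subspace topology transports Noetherianness, since there one must argue with closures in the ambient space rather than directly inside $Y$.
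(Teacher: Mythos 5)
Your proof is correct. The paper in fact states this proposition without proof, remarking only that it is a well-known result valid for arbitrary topological spaces; your write-up supplies the standard argument (ascending chain condition on opens via complementation, the maximal-finite-union trick for compactness of Noetherian spaces, and the closure argument for transporting Noetherianness to subspaces), and the one step you flagged as needing care is handled correctly, since for $C_i$ closed in $Y$ one has $C_i = D_i \cap Y$ with $D_i$ closed in the ambient space and $\mathrm{cl}(C_i) \subseteq D_i$, giving $\mathrm{cl}(C_i) \cap Y = C_i$.
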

%[\cite{AM69}: Chap-6, Ex.6]

Recall that in the sense of \textsc{Hochster} \cite{H69}, a topological space is called \emph{spectral} if it is quasi-compact, sober,  admits a basis of quasi-compact open subspaces that is closed under finite intersections.

\begin{theorem}\label{spida}
$\mathrm{Spec}(\mathrm{Idl}\,A)$ is a spectral space.
\end{theorem}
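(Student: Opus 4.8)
The plan is to verify the three defining conditions of a spectral space for $\mathrm{Spec}(\mathrm{Idl}\,A)$, regarding it, by way of Proposition \ref{mullat}, as the prime spectrum of the multiplicative lattice $(\mathrm{Idl}\,A,\subseteq,*)$: its points are the prime elements and its closed sets are the $H(\mathfrak a)$ with $\mathfrak a\in\mathrm{Idl}\,A$, and by Lemma \ref{hihr} one may always take $\mathfrak a$ to be a radical ideal. The decisive observation will be that the radical ideals of $A$ form a \emph{bounded distributive lattice}; this will let us recognise $\mathrm{Spec}(\mathrm{Idl}\,A)$ as the prime spectrum of a distributive lattice and then invoke Stone--Hochster duality, for which the three properties are automatic.

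First I would dispose of sobriety. Everything in Section~4 from Lemma \ref{T0lem} up to the theorem asserting that every irreducible closed subset has a unique generic point was established using nothing beyond the formal properties of the operator $H(-)$ collected in Proposition \ref{ztp} together with the notion of a prime element, and these are available, with verbatim proofs, in the multiplicative lattice $\mathrm{Idl}\,A$. Hence $\mathrm{Spec}(\mathrm{Idl}\,A)$ is $T_0$ (cf. Proposition \ref{t0a}), its irreducible closed sets are precisely the $H(\mathfrak p)$ with $\mathfrak p$ prime (cf. Lemma \ref{lemprime}), and each carries $\mathfrak p$ as its unique generic point; in particular the space is sober.

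Now the structural heart of the argument. Put $\mathrm{RadIdl}\,A=\{\mathfrak a\in\mathrm{Idl}\,A\mid \mathfrak a=\mathrm{Rad}\,\mathfrak a\}$, ordered by inclusion, with meet $\cap$, join $\mathfrak a\vee\mathfrak b=\mathrm{Rad}(\mathfrak a+\mathfrak b)$, bottom $\mathrm{Nil}\,A$ and top $A$; that this is a bounded lattice follows from Lemma \ref{inid}, Lemma \ref{hihr} and Proposition \ref{ztp}. Using Proposition \ref{ztp} one reduces its distributivity to the single ideal identity
\[
\mathfrak a*(\mathfrak b+\mathfrak c)=\mathfrak a*\mathfrak b+\mathfrak a*\mathfrak c\qquad(\mathfrak a,\mathfrak b,\mathfrak c\in\mathrm{Idl}\,A),
\]
which in turn follows from the elementary identity $i*(x+y)=(i*x)+\bigl(x+(i*y)-x\bigr)$ --- a direct consequence of the skew-brace axiom --- together with the normality of the ideals $\mathfrak a*\mathfrak b$ and $\mathfrak a*\mathfrak c$ in $(A,+)$; this is exactly the type of computation carried out in the proof of Lemma \ref{inid}. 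With $\mathrm{RadIdl}\,A$ now a bounded distributive lattice, I would identify $\mathrm{Spec}(\mathrm{Idl}\,A)$ with its prime spectrum: a point $\mathfrak p$ corresponds to the filter $\{\mathfrak a\in\mathrm{RadIdl}\,A\mid\mathfrak a\not\subseteq\mathfrak p\}$, whose primeness is precisely Proposition \ref{npd} (via Lemma \ref{dint}), while the $H(\mathfrak a)$ are the basic closed sets of the Stone topology. By the form of Hochster's theorem recalled above --- every bounded distributive lattice has a spectral prime spectrum --- this yields at once quasi-compactness, sobriety, and a basis of quasi-compact opens closed under finite intersections. It is here that the top element $A$ of $\mathrm{Idl}\,A$, which $A$ itself lacks as a multiplicative unit, becomes indispensable: quasi-compactness of $\mathrm{Spec}(\mathrm{Idl}\,A)$ is supplied, for free, by the Boolean prime ideal theorem applied to $\mathrm{RadIdl}\,A$. (One could instead verify Hochster's conditions directly, taking the distinguished opens $\mathrm{Spec}(\mathrm{Idl}\,A)\setminus H(\mathfrak f)$ with $\mathfrak f$ finitely generated, but the distributive-lattice route makes quasi-compactness transparent.)

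The step I expect to be the main obstacle is the identification of $\mathrm{Spec}(\mathrm{Idl}\,A)$ with the \emph{full} Stone space of $\mathrm{RadIdl}\,A$. The map $\mathfrak p\mapsto\{\mathfrak a\mid\mathfrak a\not\subseteq\mathfrak p\}$ is readily seen to be an injective continuous map onto a subspace; promoting it to a homeomorphism requires showing that $\mathrm{Spec}(\mathrm{Idl}\,A)$ has \emph{enough points}, i.e.\ that every prime filter of $\mathrm{RadIdl}\,A$ arises from a prime ideal of $A$. Concretely this amounts to showing that every proper order-ideal of $\mathrm{RadIdl}\,A$ lies below some prime ideal of $A$, a Zorn's-lemma argument that must lean on the completeness of $\mathrm{Idl}\,A$ (so that the order-ideals involved admit suprema) and on Propositions \ref{minp} and \ref{npd} to certify primeness of the maximal element produced. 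The verification of the displayed identity $\mathfrak a*(\mathfrak b+\mathfrak c)=\mathfrak a*\mathfrak b+\mathfrak a*\mathfrak c$ --- delicate because $*$ is neither additive nor associative --- is the other computational crux.
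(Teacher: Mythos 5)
Your route is genuinely different from the paper's, which simply observes (Proposition \ref{mullat}) that $(\mathrm{Idl}\,A,\subseteq,*)$ is a multiplicative lattice, notes that its top element $A$ is finitely generated as an ideal of itself, and then quotes the general theorem of Facchini, Finocchiaro and Janelidze \cite[Theorem 11.5]{FFJ22} on prime spectra of multiplicative lattices with compact top. Much of your preparatory work is sound: the sobriety argument does recycle Lemma \ref{T0lem}, Proposition \ref{t0a} and Lemma \ref{lemprime}; the identity $i*(x+y)=(i*x)+\bigl(x+(i*y)-x\bigr)$ checks out; and the radical ideals do form a bounded distributive lattice --- though you get distributivity for free, since by Lemma \ref{hihr} and the adjunction $H\dashv K$ the map $H$ restricts to an order-reversing bijection from radical ideals onto closed sets, converting $\cap$ and $+$ into $\cup$ and $\cap$ by Proposition \ref{ztp}, so the displayed $*$-identity is not actually needed.

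The gap sits exactly at the step you yourself flag as the main obstacle, and it is not a routine Zorn argument: the prime-element spectrum of $\mathrm{Idl}\,A$ is \emph{not} in general the Stone dual of the full lattice $\mathrm{RadIdl}\,A$. A prime filter $F$ of a bounded distributive lattice is induced by a prime element only when its complementary prime order-ideal $D$ contains its own supremum, i.e.\ $D$ is the principal downset of $\bigvee D$; this is a compactness phenomenon, not an existence-of-maximal-elements phenomenon, and it fails even for complete distributive lattices (in the chain $[0,1]$ the singleton $\{1\}$ is a prime filter whose complementary ideal $[0,1)$ is not principal, so it corresponds to no prime element). Hence ``every proper order-ideal lies below some prime'' is not the statement you need, and Propositions \ref{minp} and \ref{npd} cannot supply the missing closure of $D$ under directed suprema. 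The standard remedy --- and the substance of the theorem the paper cites --- is to pass to the Stone dual of the sublattice of \emph{compact} radical ideals (radicals of finitely generated ideals), using that every ideal is the directed join of its finitely generated subideals and that the top element $A$ is itself compact; it is only there that the finite generation of $A$, which your argument never invokes at the point where it matters, enters. As written, your argument establishes sobriety but not quasi-compactness nor the existence of a basis of quasi-compact opens, so the proof is incomplete.
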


\begin{proof}
Note that by Proposition \ref{mullat}, $\mathrm{Idl}\,A$ is a multiplicative lattice. Since every skew brace $A$ is finitely generated (by $e$) as an ideal of itself, by \textsc{Facchini, Finocchiaro, \& Janelidze} \cite[Theorem 11.5]{FFJ22} we have the desired claim.
\end{proof}

\begin{conrem}
	
\emph{The following are some natural topological questions that arise when we study spectral topology on a prime spectrum of an algebraic structure and we do not have answers to these questions for skew braces.}

\emph{$\bullet$ Although the partition of unity property is a sufficient condition for compactness of prime spectrum of a ring with identity (see \textsc{Dube \& Goswami} \cite{DG22}), we can not apply the same argument for skew braces. In the context of a skew brace $A$, when we try to apply the finite intersection property, although we obtain a finite sum representation of the identity $e$ of $A$, but from that we cannot conclude the same for an arbitrary element of $A$.}

\emph{$\bullet$ If $a$ is an element of a skew brace $A$ such that $a^2=a$, then we immediately see that $a=e$. This means a skew brace does not have nontrivial idempotent elements. For a commutative ring $R$ (with identity), without having any nontrivial idempotetnt elements implies $\mathrm{Spec}\,R$ is connected. Therefore, one may expect the same for a skew brace. Once again not having zero divisors and ``multiplicative identity'' in a skew brace lead to the problem of checking the connectivity.}

\emph{$\bullet$  \textsc{Hochster} \cite{H69} proved that a prime spectra of a commutative ring (with 1) endowed with spectral topology is spectral. We proved $\mathrm{Spec}(\mathrm{Idl}\,A)$ is spectral (see Theorem \ref{spida}), but it could be interesting to know whether $\mathrm{Spec}\,A$ itself is spectral.}

\emph{$\bullet$ Proposition \ref{fwn} gives a sufficiant condition for $\mathrm{Spec}\,A$ to be Noetherian. But it would be nice to characterize Notherian spaces as well as obtain a similar result for Artinian spaces.}
\end{conrem}

\section*{Acknowledgement}

The second author wishes to express his gratitude to  Leandro Vendramin
and Paola Stefanelli for some fruitful discussions.

\end{document}